\newtheorem{thm}{Theorem}[section]
\newtheorem*{main}{Main Theorem}
\newtheorem{cor}[thm]{Corollary}
\newtheorem{prop}[thm]{Proposition}
\newtheorem{lem}[thm]{Lemma}
\theoremstyle{definition}
\newtheorem{defi}{Definition}
\newtheorem{exam}{Example}
\newcommand{\D}{\mathbb{D}}
\newcommand{\R}{\mathbb{R}}
\newcommand{\Z}{\mathbb{Z}}
\newcommand{\Q}{\mathbb{Q}}
\newcommand{\C}{\mathbb{C}}
\newcommand{\N}{\mathbb{N}}
\newcommand{\EC}{\widehat{\mathbb{C}}}
\newcommand{\MP}{\mathcal{P}}
\newcommand{\MC}{\mathcal{C}}
\newcommand{\diam}{\textup{diam}}
\newcommand{\dist}{\textup{dist}}
\newcommand{\ii}{\textup{i}}
\makeatletter\@addtoreset{equation}{section}\makeatother
\begin{document}

\author[Y. Fu]{Yuming Fu}
\address{School of Mathematical Sciences, Shenzhen University, Shenzhen 518060, P. R. China}
\email{yumingfuxy@szu.edu.cn}

\author[Y. Zhang]{Yanhua Zhang*}
\address{School of Mathematical Sciences, Qufu Normal University, Qufu 273165, P. R. China}
\email{zhangyh0714@qfnu.edu.cn}

\title[Mating Siegel and Thurston quadratic polynomials]{Mating Siegel and Thurston quadratic polynomials}

\begin{abstract}
We prove that a quadratic polynomial with a bounded type Siegel disk and a quadratic postcritically finite polynomial are always mateable.
\end{abstract}

\subjclass[2020]{Primary: 37F10; Secondary: 37F20, 37F50}

\keywords{Mating; Siegel disk; Thurston polynomial}

\date{\today}

\maketitle

\section{Introduction}
\medskip

\subsection{Siegel quadratic polynomials and Thurston quadratic polynomials}
Let $0<\theta<1$ be a bounded type irrational number. Any quadratic polynomial having a fixed Siegel disk with rotation number $\theta$ is conjugate to
\begin{equation*}
    P_{\theta}:z\mapsto e^{2\pi i \theta}z+z^2
\end{equation*}
by an affine map. According to Douady-Herman (see \cite{Dou87}, \cite{Her87}), $P_\theta$ has a Siegel disk $D_{P_\theta}$ centered at zero whose boundary is a quasi-circle containing the critical point $x_\theta=-e^{2\pi i \theta}/2$ of $P_\theta$. Petersen proved that the Julia set $J(P_\theta)$ is locally connected and has zero Lebesgue measure in \cite{Pet96} (see also \cite{Yam99}). Note that every bounded Fatou component of $P_\theta$ is a Jordan disk, which is a preimage of $D_{P_\theta}$.

Given a polynomial $P$ with degree at least two, let us denote by $Crit(P)$ the set of all critical points of $P$ in $\C$. The \textit{postcritical set} of $P$ is
$$\mathcal{P}(P):=\overline{\bigcup_{n\geq 1 } P^n(Crit(P))}.$$
We say that $P$ is postcritically finite if $\mathcal{P}(P)$ is a finite set. This kind of polynomial is also called Thurston's type.  Let 
\begin{equation*}
    P_{c}:z\mapsto z^2+c
\end{equation*}
be a Thurston's type quadratic polynomial, i.e., the orbit of zero is finite. Then $J(P_c)$ is connected by \cite{Fat19}. When the critical point $x_0=0$ of $P_c$ is preperiodic but not periodic, the Julia set $J(P_c)$ is a dendrite, which is a compact, pathwise connected, locally connected and nowhere dense set that does not separate the plane (see \cite[Theorem 4.2]{CG93}). When $x_0$ is periodic, $P_c$ has a super-attracting cycle. Then $J(P_c)$ is locally connected by \cite[Theorem 4.1]{CG93}. Furthermore, every bounded Fatou component of $P_c$ is a Jordan disk (see \cite{RY22}) which is one of the preimages of this cycle. 

\subsection{Mating: Definitions and the main result}
Mating two polynomials with the same degree into a rational map is an idea introduced by Douady and Hubbard in \cite{Dou83} to understand the rational dynamics. Let $\C$, $\EC$, $\Delta$ denote the complex plane, the Riemann sphere, and the open unit disk, respectively. Suppose $P_1$ and $P_2$ are two polynomials of degree $d \ge 2$ so that the Julia sets $J(P_1)$ and $J(P_2)$ are connected and locally connected. Let $\Phi_{P_j}$, $j=1,2$, be the B\"{o}ttcher map of $P_j$ which maps the basin of infinity $\EC\setminus K(P_j)$ onto the outside of the closed unit disk $\EC\setminus \overline{\D}$ such that
$$\Phi_{P_j}(P_j(z))=(\Phi_{P_j}(z))^2\quad \textrm{for all}\quad z\in\EC\setminus K(P_j),$$
where $K(P_j)$ is the filled Julia set of $P_j$. We use the notation 
$$R_t(P_j):=\Phi^{-1}_{P_j}(\{re^{2\pi i t}|1< r< \infty\}) $$ 
for the external ray of $P_j$ with angle $t\in\R/\Z$, where $j=1,2$. 
By Carath\'{e}odory's theorem, the inverse map $\Phi^{-1}_{P_j}$ has a continuous extension to their corresponding boundaries. Then the limit
$$x=\underset{r \to 1^+}{\lim}\Phi^{-1}_{P_j}(re^{2\pi i t})$$ 
exists, and we say that $R_t(P_j)$ lands at the point $x$.

The general idea of the mating can be described as follows. First, we add a circle at infinity to $\C$ and get 
$$\tilde{\C} = \C \cup \{\infty \cdot e^{2 \pi i t}| t\in\R/\Z\}.$$
With the natural topology, $\tilde{\C}$ is homeomorphic to the closed unit disk. Then we extend 
$$P_1: \C \to \C   \hbox{ and }   P_2: \C \to \C$$
continuously to
$$\tilde{P_1}: \tilde{\C}  \to \tilde{\C}   \hbox{ and } \tilde{P_2}: \tilde{\C}  \to \tilde{\C}$$
so that 
$$\tilde{P_1}(\infty\cdot e^{2 \pi i t})  = \tilde{P_2}(\infty\cdot e^{2 \pi i t})   =  \infty \cdot e^{2 \pi i dt}.$$ 
To distinguish, we denote $\tilde{\C}$ as $\tilde{\C}_{P_1}$ and $\tilde{\C}_{P_2}$ respectively for $P_1$ and $P_2$. By gluing $\tilde{\C}_{P_1}$ and $\tilde{\C}_{P_2}$ along the circle at  infinity so that  $\infty \cdot e^{2 \pi i t}$ and $\infty \cdot e^{2 \pi i (1-t)}$ are identified, we get a two-sphere $S^2 = \tilde{\C}_{P_1} \sqcup\tilde{\C}_{P_2}/\sim$.  Define
$$P_1 \sqcup P_2:  S^2\to S^2$$ 
by setting $P_1 \sqcup P_2|_{\tilde{\C}_{P_1}}= P_1$ and $P_1 \sqcup P_2|_{\tilde{\C}_{P_2}} = P_2$. We say that $P_1 \sqcup P_2$ is the formal mating of $P_1$ and $P_2$. Since we added the circle to $\mathbb{C}$, the conformal structure on $S^2$ has been destroyed. However, we can keep the conformal structure in a neighborhood of the filled Julia sets $K(P_1)$ and $K(P_2)$.

In the case that $P_1$ and $P_2$ are Thurston's type polynomials, the map $P_1\sqcup P_2$ must be a postcritically finite branched covering of $S^2$ to itself which preserves the orientation. Suppose $P_1\sqcup P_2$ has no Thurston obstructions by removing trivial obstructions if necessary. By Thurston's Theorem (\cite{DH93}), there is a rational map which is combinatorially equivalent to $P_1\sqcup P_2$ rel $\mathcal{P}(P_1\sqcup P_2)$. In this case, we say that $P_1$ and $P_2$ are strongly mateable.

There is another viewpoint on the mating which applies to the case of postcritically infinite polynomials as well. To get that, let us see how $\tilde{\C}_{P_1}\sqcup\tilde{\C}_{P_2}$ is deformed as we iterate the Thurston's pullback map (\cite{BEKP09}). We may view $\tilde{\C}_{P_1} \sqcup\tilde{\C}_{P_2}$ as the union of two filled Julia sets $K(P_1)$, $K(P_2)$ and the family of joined external rays 
\begin{equation*}
    R_t = \overline{R_t(P_1)}\sqcup \overline{R_{1-t}(P_2)}, t\in\R/\Z,
\end{equation*}
where 
$$\overline{R_t(P_j)}=R_t(P_j)\cup\{\Phi^{-1}_{P_j}(e^{2\pi it})\}\cup\{\infty\cdot e^{2\pi it}\in\tilde{\C}_{P_j}\},~~j=1,2.$$
\begin{defi}
    We say two points $x\in J(P_1)$ and $y\in J(P_2)$ are \emph{ray equivalent} if and only if $x$ and $y$ are connected by a path consisting of joined external rays.
\end{defi}

Suppose $P_1\sqcup P_2$ is combinatorially equivalent to some rational map $R$.  Let 
$$\phi_1, \phi_2:  \tilde{\C}_{P_1} \sqcup\tilde{\C}_{P_2} \to \EC$$ 
be homeomorphisms which are isotopic to each other rel $\mathcal{P}(P_1\sqcup P_2)$ such that
\begin{equation*}
    R = \phi_1 \circ P_1\sqcup P_2 \circ \phi_2^{-1}.
\end{equation*}
Let $[\phi_1]$ be the isotopy class of $[\phi_1]$ rel $\mathcal{P}(P_1\sqcup P_2)$.
Since $[\phi_1] = [\phi_2]$ is the fixed point of the Thurston's pullback map, we may get a sequence of homeomorphisms $\phi_n: \tilde{\C}_{P_1} \sqcup\tilde{\C}_{P_2} \to \EC$ with $[\phi_n] = [\phi_1]$ for all $n \ge 1$ such that 
\begin{equation*}
    R=\phi_n \circ P_1\sqcup P_2 \circ \phi_{n+1}^{-1}.
\end{equation*}
For each $n \ge 1$, $\phi_n(K(P_1))$ and $\phi_n(K(P_2))$ are two homeomorphic copies of $K(P_1)$ and $K(P_2)$ which are disjoint from each other, and they are connected by the family of rays $\phi_n(R_t)$, $t\in\R/\Z$. In many known cases, in particular for the postcritically finite case, we have    
\begin{equation} \label{slow-mating}
 {\rm diam}(\phi_n(R_t)) \to 0  \hbox{  as  } n\to\infty,
\end{equation} 
where $\rm{diam}(\cdot)$ denotes the diameter with respect to the spherical metric. We call (\ref{slow-mating}) \emph{the slow-mating property}.

Now suppose (\ref{slow-mating}) holds. Then as $n \to \infty$, the two homeomorphic copies of the filled Julia sets become closer and closer to each other and finally glue into the whole sphere, the $\phi_n$-images of the points in the Julia sets $J(P_1)$ and $J(P_2)$ which are connected by rays collide into a single point and $\phi_n$ converges uniformly to a continuous map $\phi$. The action of the rational map on $\phi(K(P_1))$(resp. $\phi(K(P_2))$) is semi-conjugate to $K(P_1)$(resp. $K(P_2)$). Let us summarize these as

\begin{defi}\label{mating}
We call a rational map $R$ the mating of $P_1$ and $P_2$ if $P_1\sqcup P_2$ is combinatorially equivalent to $R$ so that the slow-mating property (\ref{slow-mating}) is satisfied, and moreover, there is a continuous semi-conjugacy $\phi: \EC  \to \EC$ with   
$$\phi \circ (P_1 \sqcup P_2)  = R \circ \phi$$
which is conformal in the interior of $K(P_1)$ and $K(P_2)$, such that
$$\phi (K(P_1)) \cup \phi(K(P_2))  = \EC,$$ 
and furthermore, $\phi(x) = \phi(y)$ if and only if $x$ and $y$ are ray equivalent to each other.
\end{defi}

There are several types of mating. We refer the reader to \cite{PM12} for a thorough explanation of their definitions.

The main result of this paper is

\begin{main}
Let $P_\theta$ be a quadratic polynomial with a bounded type Siegel disk and $P_c$ be a quadratic postcritically finite polynomial. Then $P_\theta$ and $P_c$ are mateable in the sense of Definition \ref{mating}.
\end{main}

The proof of the main theorem is based on two tools. The first one is a characterization theorem of certain class of rational maps with Siegel disks in \cite{Zha22}, which allows us to find a candidate rational map $G$ for the mating of $P_\theta$ and $P_c$. The second one is a contraction lemma in \cite{WYZZ23} which allows us to prove that $G$ does realize the mating of $P_\theta$ and $P_c$.

\subsection{Statement of some known results}
For the Main Theorem, a particular case $P_c(z)=z^2-2$ which is known as \emph{Chebyshev} quadratic polynomial appeared as a special example in \cite{YZ01}. Another particular case in \cite{Yan15} is $P_c(z)=z^2-1$ with a super-attracting cycle of period two. It is worth mentioning that the matability of the Basilica (i.e., the Julia set of \( z \mapsto z^2 - 1 \)) with some other quadratic polynomials, such as non-renormalizable quadratic polynomials, etc (see \cite{Tim08}, \cite{AY09}, and \cite{Dud11}).

In general, to solve the mating problem of two quadratic polynomials, it is necessary to first find the candidate rational map and then to prove that this model function is actually the desired mating (see \cite{Pet99}, \cite{YZ01}, \cite{Yan15} and \cite{FY23} for examples). In this paper, we will prove our main theorem using a different argument from \cite{YZ01} and \cite{Yan15}. Specifically, we do not rely on analytic Blaschke models and complex a prior bounds.

In some certain cases, it is difficult to obtain the explicit formulas of rational maps after mating. Rees proved if the formal mating of two hyperbolic polynomials $P_1$ and $P_2$ of Thurston's type is combinatorially equivalent to a rational map, then the topological mating is conjugate to the rational map (see \cite{Ree92}). In other words, if there is no Thurston obstructions for $P_1\sqcup P_2$, then the rational map $R$ which is combinatorially equivalent to $P_1\sqcup P_2$ can be realized the mating of $P_1$ and $P_2$ in the sense of Definition \ref{mating}. Shishikura proved that this is also true even if the polynomials are merely Thurston's type and the formal mating can be replaced by the degenerate mating (see \cite{Shi00}). 

Besides, there are some efficient algorithms which can be used to compute numerical approximations of the explicit formulas of rational maps after mating, see \cite{BH12}, \cite{Jun17} and \cite{Wil19}. 

Rees, Shishikura and Tan proved that any two pair of subhyperbolic quadratic polynomials are conformally mateable if they are not in conjugate limbs of the Mandelbrot set (\cite{Ree86}, \cite{Ree92}, \cite{Shi00}, \cite{Tan92}). The mateable results on subhyperbolic quadratics have been extended to geometrically finite case by parabolic surgery (\cite{HT04}). Because of the emergence of more critical orbits and more combinatorics, there are some essential differences between the mating of higher degree polynomials (see \cite{Che12}, \cite{AR16} and \cite{Sha19} for the mating of special cubic polynomials). Furthermore, Shishikura and Tan have showed that there exist cubic polynomials which are topologically mateable, but the resulting branched covering fails to be equivalent to a rational map in \cite{ST00}. Restricting attention to $S_1$ which is a space of monic, centered cubic polynomials with a marked fixed critical point, Sharland showed that there exist two postcritically finite polynomials in $S_1$ which do not lie in conjugate limbs of $S_1$, but they are not mateable in \cite{Sha23}. For more questions on mating polynomials, see \cite{BEKMPRT12}.

\textbf{Acknowledgments.} The authors would like very much to thank Professor GaoFei Zhang for his introductions, support and suggestions. Without his guidance,  this work would not have existed. Thanks are due to Professors Guizhen Cui, Fei Yang and Dr. Luxian Yang, for their encouragement and fruitful discussions with authors. In addition, the research is supported by National Natural Science Foundation of China (Grant Number: \textbf{11801305, 12171276 and 12371085}) and the Natural Science Foundation of Shandong Province (Grant Number: \textbf{ZR2023MA044}).

\section{The candidate rational map of the mating}

Let $0 < \theta < 1$ be an irrational number of bounded type and be fixed throughout. Let $R^{geom}_{\theta}$ be the class of all rational maps $g$ such that:
\begin{enumerate}
    \item $g$ has a fixed Siegel disk $D_g$ with rotation number $\theta$;
    \item the forward orbit of each critical point of $g$ either intersects $\overline{D_g}$, or is eventually periodic, or belongs to the basin of some attracting cycle.
\end{enumerate}
The object of this section is to find the candidate rational map $G\in R^{geom}_{\theta}$ of the mating $P_\theta\sqcup P_c$ by the main result in \cite{Zha22}.

Let $R_\theta^{top}$ denote the class of all orientation-preserving branched coverings $f$ of the sphere to itself that have finite degree and satisfy the following properties: 
\begin{itemize}
    \item the restriction of $f$ in the unit disk $\Delta$ is the rigid rotation $\rho_\theta(z)=e^{2\pi i\theta}z$;
    \item $f$ has at least one critical point on the unit circle;
    \item the forward orbit of every critical point is either eventually finite, attracted to some holomorphic attracting cycle, or intersects the closed unit disk.
\end{itemize}

\begin{defi}\label{com-equivalent}(\cite{Zha22})\label{zhang}
    We say a map $f\in R_\theta^{top}$ is combinatorially equivalent to a map $g\in R^{geom}_{\theta}$ if there exist a pair of homeomorphisms $\psi_1$, $\psi_2:\EC\to \EC$ such that:
    \begin{enumerate}
        \item $\psi_1\circ f= g\circ \psi_2$;
        \item $\psi_1|_{\Delta}= \psi_2|_{\Delta}$ is holomorphic;
        \item for each point $x_i$ in the holomorphic attracting cycles of $f$, there is a Jordan disk $D_i$ containing $x_i$ such that $\psi_1|_{D_i}= \psi_2|_{D_i}$ is holomorphic, and $\psi_1$ is isotopic to $\psi_2$ rel $\mathcal{P}(f)\cup \cup_i\overline{D_i}$.
    \end{enumerate}
\end{defi}

\begin{defi}
Suppose $f:\EC\to \EC$ is an orientation-preserving branched covering map. We say that a simple closed curve $\gamma\in \EC\setminus \mathcal{P}(f)$ is \emph{non-peripheral} if each component of $\EC\setminus\gamma$ contains at least two points in $\mathcal{P}(f)$. Let $\Gamma=\{\gamma,\cdots,\gamma_n\}$ be a family of disjoint non-peripheral curves which are not homotopic to each other. We call such $\Gamma$ a \emph{multi-curve}. We say that a multi-curve $\Gamma$ is \emph{stable} if for each $\gamma_j\in\Gamma$, all the non-peripheral components of $f^{-1}(\gamma_j)$ are homotopic to the elements in $\Gamma$. Associated to each stable multi-curve $\Gamma$, there is a linear transformation $f_{\Gamma}:\R^{\Gamma}\to \R^{\Gamma}$ with non-negative entries: let $\gamma_{i,j,\alpha}$ denote all the non-peripheral components of $f^{-1}(\gamma_j)$ homotopic to $\gamma_i$, and let $d_{i,j,\alpha}$ denote the covering degree of $f:\gamma_{i,j,\alpha}\to \gamma_j$, define
\begin{equation*}
    f_{\Gamma}(\gamma_j)=\sum_{i}a_{i,j}\gamma_i,
\end{equation*}
where $a_{i,j}=\sum\limits_{\alpha}1/d_{i,j,\alpha}$. Let $\lambda_{\Gamma}$ denote the maximal eigenvalue of $f_{\Gamma}$. We call $\Gamma$ a \emph{Thurston obstruction} if $\lambda_{\Gamma}\ge 1$.
\end{defi}

Every $g\in R^{geom}_{\theta}$ is modeled by some $f\in R_\theta^{top}$ in the sense of Definition \ref{com-equivalent}, since a Siegel disk of a rational map with bounded type rotation number is a quasi-disk with at least one critical point on the boundary by \cite{Zha11}.

\begin{thm}[see \cite{Zha22}]\label{zhang}
Suppose $f \in R_\theta^{top}$. Then $f$ is combinatorially equivalent to some rational map $g\in R_\theta^{geom}$ if and only if $f$ has no Thurston obstructions in $\EC\setminus \overline{\Delta}$, and moreover, if it exists, $g$ must be unique up to M$\ddot{o}$bius conjugation. 
\end{thm}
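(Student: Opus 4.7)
The plan is to split the proof into three parts: necessity, sufficiency, and uniqueness.

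For necessity, assume $f$ is combinatorially equivalent to $g\in R_\theta^{geom}$ through homeomorphisms $\psi_1,\psi_2$. Given any candidate multicurve $\Gamma\subset\EC\setminus\overline{\Delta}$, its image $\psi_1(\Gamma)$ lies in the complement of the Siegel disk and the distinguished disks $\overline{D_i}$ of $g$. Because $\psi_1$ and $\psi_2$ agree holomorphically on $\Delta$ and on each $D_i$, and are isotopic rel $\mathcal{P}(f)\cup\bigcup_i\overline{D_i}$, the Thurston transition matrices of $(f,\Gamma)$ and $(g,\psi_1(\Gamma))$ coincide. Thus any obstruction of $f$ lying outside $\overline{\Delta}$ would produce a Thurston obstruction for the rational map $g$, which is forbidden by rigidity of rational maps in $R_\theta^{geom}$ (using that critical orbits inside the Siegel disk are controlled by the rotation $\rho_\theta$ and the attracting basins contain their postcritical limit set).

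For sufficiency, which is the main content, I would proceed by a quasiconformal surgery argument that reduces the statement to the classical theorem. Replace the rigid rotation on $\Delta$ by a quasi-regular map with an attracting fixed point at the origin (for instance, a Blaschke-type interpolation between $z\mapsto\lambda z$ on a small sub-disk and $\rho_\theta$ near $\partial\Delta$). The modified branched covering $\tilde{f}$ is geometrically finite, and, crucially, any multicurve in $\EC\setminus\overline{\Delta}$ has the same pull-back behavior under $\tilde{f}$ as under $f$, while multicurves crossing $\overline{\Delta}$ can be isotoped off the new attracting basin. Hence the no-obstruction hypothesis for $f$ passes to $\tilde{f}$ in its entirety. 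McMullen's geometrically finite version of Thurston's theorem then produces a rational map $\tilde{g}$ combinatorially equivalent to $\tilde{f}$. To undo the surgery, transport $\rho_\theta$ onto the attracting basin of $\tilde{g}$ via the Koenigs linearization, producing a $\tilde{g}$-invariant Beltrami differential with uniformly bounded dilatation (bounded type is crucial here). Integrating by the measurable Riemann mapping theorem and conjugating yields a rational map $g$ with a fixed Siegel disk of rotation number $\theta$. By the Zhang (2011) theorem, this boundary is a quasi-circle containing a critical point, so $g\in R_\theta^{geom}$, and tracking the surgery produces the combinatorial equivalence between $f$ and $g$.

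For uniqueness, if $g_1,g_2\in R_\theta^{geom}$ are both combinatorially equivalent to $f$, compose the equivalences to get a combinatorial equivalence between $g_1$ and $g_2$ through homeomorphisms that are holomorphic on the Siegel disks and on the distinguished $D_i$'s. A standard pull-back argument spreads the holomorphic conjugation over the whole Fatou set, whose iterated preimages of the Siegel disk and attracting basins are dense. Since maps in $R_\theta^{geom}$ admit no invariant line fields on the Julia set, the resulting quasi-conformal conjugation is conformal and therefore M\"obius. The main obstacle throughout is the surgery step: one must verify that the transition-matrix correspondence between $f$ on $\EC\setminus\overline{\Delta}$ and $\tilde{f}$ is sharp, and that the inverse surgery restores exactly the rotation number $\theta$ rather than a merely conjugate linearizing coordinate. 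The bounded-type hypothesis on $\theta$ is essential both for the regularity of the Siegel boundary and for the uniform dilatation control needed in both legs of the surgery.
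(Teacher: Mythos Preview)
This theorem is not proved in the paper: it is quoted verbatim from \cite{Zha22} and used as a black box (the paper only applies it in Lemma~\ref{thurston-equi}). There is therefore no ``paper's own proof'' to compare against.

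As an independent attempt, your outline captures the right conceptual shape but the sufficiency step, as written, has a real gap. When you replace the rigid rotation on $\Delta$ by an attracting dynamics, the postcritical set changes drastically: under $f$ the entire boundary $\partial\Delta$ (an infinite set containing accumulations of critical orbits) lies in $\mathcal{P}(f)$, whereas your modified $\tilde f$ would have the critical orbits formerly on $\partial\Delta$ now spiralling into the attractor. It is not true that ``multicurves crossing $\overline{\Delta}$ can be isotoped off the new attracting basin'' without further argument, and McMullen's geometrically finite Thurston theorem does not apply directly because $\tilde f$ is only quasi-regular, not a branched cover, on the annulus where you interpolate. More seriously, the inverse surgery you describe---transporting $\rho_\theta$ back via a K\"onigs coordinate---does not in general produce a \emph{Siegel} disk: you would get an invariant Beltrami differential supported on the attracting basin, but integrating it yields a map with an indifferent fixed point only if the multiplier becomes exactly $e^{2\pi i\theta}$, which is a codimension-one condition you have not enforced. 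The actual argument in \cite{Zha22} does not proceed by this kind of surgery; it runs a Thurston-type iteration directly on a Teichm\"uller space adapted to the Siegel boundary data, and the bounded-type hypothesis enters through control of the iteration rather than through a dilatation bound in a surgery. Your necessity and uniqueness sketches are essentially correct in spirit.
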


It is clear that $$F:=P_{\theta}\sqcup P_c$$ is a degree two topological branched covering of the sphere which is modeled by a map in $R_\theta^{top}$ by definition. If $F$ has no Thurston obstructions, by Theorem \ref{zhang}, we can find a quadratic rational map $G$ with a bounded type Siegel disk  which is combinatorially equivalent to $F$. The equivalence maps are conformal when restricted on the Siegel disk of $P_{\theta}$, and if $P_c$ has a super-attracting cycle, they are also conformal in open neighborhoods of the points in this cycle. 

\begin{lem}\label{thurston-equi}
 There exists a quadratic rational map $G \in R_\theta^{geom}$ and a pair of homeomorphisms $\phi_1$, $\phi_2:\EC\to \EC$ such that:
   \begin{enumerate}
        \item $\phi_1\circ F= G \circ \phi_2$;
        \item $\phi_1|_{D_{P_{\theta}}}= \phi_2|_{D_{P_{\theta}}}$ is holomorphic;
        \item for each point $x_i$ in the holomorphic attracting cycles of $F$, there is a Jordan disk $D_i$ containing $x_i$ such that $\phi_1|_{D_i}= \phi_2|_{D_i}$ is holomorphic, and $\phi_1$ is isotopic to $\phi_2$ rel $\mathcal{P}(F)\cup \cup_i\overline{D_i}$.
    \end{enumerate}
\end{lem}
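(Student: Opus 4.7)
The plan is to apply Theorem~\ref{zhang} to $F = P_\theta \sqcup P_c$. There are two steps: (a) realising $F$, up to a sphere homeomorphism, as an element of $R_\theta^{top}$; and (b) verifying that $F$ has no Thurston obstructions in $\EC\setminus\overline{\Delta}$.

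Step (a) is bookkeeping. Since $\theta$ is of bounded type, $\partial D_{P_\theta}$ is a quasi-circle containing $x_\theta$, and $P_\theta|_{\overline{D_{P_\theta}}}$ is quasi-conformally conjugate to $\rho_\theta|_{\overline{\Delta}}$. I would extend this boundary/interior conjugation to a global sphere homeomorphism $h:S^2\to\EC$ sending $D_{P_\theta}$ to $\Delta$, and set $\tilde F := h \circ F \circ h^{-1}$. By construction $\tilde F$ is a degree-two orientation-preserving branched covering with $\tilde F|_\Delta = \rho_\theta$; its critical points are $h(x_\theta) \in \partial\Delta$ and the image of the $P_c$ critical point, whose orbit is either eventually periodic (dendrite case) or attracted to a super-attracting cycle. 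Hence $\tilde F \in R_\theta^{top}$.

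Step (b) is the crux, and the key is the observation highlighted in the paragraph preceding the lemma: outside $\overline{D_{P_\theta}}$ together with its $F$-preimage, $F$ has the combinatorics of the postcritically finite quadratic polynomial $P_c$. To exploit this I would collapse each component of the grand orbit $\bigcup_{n \ge 0}\tilde F^{-n}(\overline{\Delta})$ to a point; the resulting quotient $\bar F$ is a postcritically finite degree-two branched covering that is combinatorially equivalent to $P_c$ (the Siegel-disk copies disappear and only the polynomial side remains). Suppose $\Gamma = \{\gamma_1,\dots,\gamma_n\}$ were a Thurston obstruction for $\tilde F$ inside $\EC \setminus \overline{\Delta}$. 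By $\tilde F$-stability, the non-peripheral components of $\tilde F^{-1}(\Gamma)$ are homotopic rel $\mathcal{P}(\tilde F)$ into $\Gamma$ and can therefore be taken disjoint from the grand orbit of $\overline{\Delta}$, so the image of $\Gamma$ descends to a multicurve whose transition matrix agrees with that of $\Gamma$ and which is an obstruction for $\bar F$. But postcritically finite quadratic polynomials admit no Thurston obstructions --- by the standard absence of Levy cycles in the dendrite case, or by Thurston's theorem applied directly to $P_c$ in the super-attracting case. This contradiction completes step (b). Theorem~\ref{zhang} then produces $\tilde G \in R_\theta^{geom}$ and homeomorphisms $\tilde\psi_1,\tilde\psi_2:\EC\to\EC$ realising Definition~\ref{com-equivalent} for $\tilde F$; setting $G := \tilde G$ and $\phi_i := \tilde\psi_i \circ h$ for $i=1,2$ yields the three conclusions of the lemma, since $h$ identifies $D_{P_\theta}$ with $\Delta$ and carries the Jordan disks around attracting-cycle points through unchanged.

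The principal obstacle is the descent argument in step (b): one must check that the components of the grand orbit of $\overline{D_{P_\theta}}$ form a collection of Jordan disks meeting only at precritical points on their boundaries (so the quotient is a bona fide postcritically finite branched cover), and that any Thurston obstruction of $\tilde F$ avoiding $\overline{\Delta}$ descends to an obstruction of $\bar F$ which remains essential and non-peripheral --- in particular, that no $\gamma_i$ becomes homotopically trivial or peripheral after the collapse, and that the transition matrix is preserved.
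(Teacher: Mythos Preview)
Your Step~(a) is correct and matches the paper almost verbatim: the paper extends the linearising conformal map $h:D_{P_\theta}\to\Delta$ quasi-conformally to $H:\EC\to\EC$, sets $\hat P_\theta = H\circ P_\theta\circ H^{-1}$, forms $\hat P_\theta\sqcup P_c\in R_\theta^{top}$ (conjugate to $F$ by $\hat H = H\cup\mathrm{id}$), and finishes with $\phi_i=\psi_i\circ\hat H$, exactly your $\phi_i=\tilde\psi_i\circ h$.

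Step~(b) has the right target --- reduce to the absence of obstructions for $P_c$ --- but the grand-orbit collapse you propose does not work as written. The closed preimage disks in $\bigcup_{n\ge0}\tilde F^{-n}(\overline\Delta)$ are not disjoint: already $\overline\Delta$ and its sibling preimage meet at $h(x_\theta)$, and this touching propagates, so the whole union is a single connected set (with closure $h(K(P_\theta))$). Hence ``collapse each component to a point'' either collapses everything to one point or yields a non-Hausdorff quotient; in neither case do you get a degree-two branched cover $\bar F$ on a sphere. The obstacle you flag in your last paragraph is therefore not a technical detail to be checked but a genuine failure of the construction.

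The paper sidesteps this entirely by never building a quotient map; it argues directly on curves via a two-case split. If a non-peripheral $\gamma\subset\EC\setminus\overline\Delta$ separates $\overline{D_{P_\theta}}$ from all of $\mathcal P(P_c)$, then $F^{-1}(\gamma)$ is a single curve homotopic to $\gamma$ covering it with degree $2$, so the eigenvalue is $\tfrac12$. Otherwise $\gamma$ has $\overline{D_{P_\theta}}$ together with at least one point of $\mathcal P(P_c)$ on one side; since $\mathcal P(F)\setminus\overline{D_{P_\theta}}=\mathcal P(P_c)\subset\tilde\C_{P_c}$ and $F$ preserves each hemisphere acting as $P_c$ on $\tilde\C_{P_c}$, one may homotope $\Gamma$ into $\tilde\C_{P_c}$ and read off its Thurston matrix as that of a multicurve for $P_c$ with $\overline{D_{P_\theta}}$ playing the role of the marked point $\infty$. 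As $P_c$ is a polynomial it has no obstruction, so neither does $F$. This is the one-line ``regard $\overline{D_{P_\theta}}$ as $\infty$'' in the paper; only the single disk $\overline{D_{P_\theta}}$ is treated as a point, not its entire grand orbit, and no quotient branched cover needs to be constructed.
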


\begin{proof}
For the Siegel disk $D_{P_\theta}$, there exists a conformal map $h:D_{P_\theta}\to \Delta$ such that $h\circ P_\theta\circ h^{-1}(z)=\rho_\theta(z)=e^{2\pi i\theta}z$. Since $\partial D_{P_\theta}$ is a quasi-disk, $h$ has a quasi-conformal extension $H:\EC\to \EC$. Define $\hat{P}_\theta:= H\circ P_\theta\circ H^{-1}$. Then $\hat{P}_\theta$ is quasi-conformally conjugate to $P_\theta$ and $\hat{P}_\theta$ maps $\Delta$ to itself.

Consider the formal mating of $\hat{P}_\theta$ and $P_c$ with a slight difference from $F$, which glues $\tilde{\C}_{\hat{P}_\theta}$ and $\tilde{\C}_{P_c}$ along the circle at the infinity so that $H(\infty\cdot e^{2\pi i t})$ and $\infty\cdot e^{2\pi i (1-t)}$ are identified. Therefore, $\hat{P}_\theta\sqcup P_c\in R_\theta^{top}$ and quasi-conformally conjugates to $F$ by a quasi-conformal map 
\begin{equation}
\hat{H}(z) :=
\begin{cases}
H, &  z\in \tilde{\C}_{\hat{P}_\theta}, \\
id, &  z\in \tilde{\C}_{P_c},
\end{cases}
\end{equation}
where $\hat{H}$ is conformal on $D_{P_\theta}$ and $\tilde{\C}_{P_c}$.

Suppose $\hat{P}_\theta\sqcup P_c$ has a Thurston obstruction $\Gamma$. If a non-peripheral curve $\gamma\in \Gamma$ separates $\overline{D_{P_\theta}}$ and $\mathcal{P}(P_c)$, $\gamma$ cannot be a Thurston obstruction since $(\hat{P}_\theta\sqcup P_c)^{-1}(\gamma)$ is homotopic to $\gamma$ with $\lambda_\Gamma=\frac{1}{2}$. If a complementary component of a non-peripheral curve $\gamma\in \Gamma$ encloses $\overline{D_{P_\theta}}$ and at least one point of $\mathcal{P}(P_c)$, then $\Gamma$ will be a Thurston obstruction of $P_c$ by regarding $\overline{D_{P_\theta}}$ as one point of $\mathcal{P}(P_c)$ located at $\infty$ of $P_c$ and this is impossible. Therefore, $\hat{P}_\theta\sqcup P_c$ has no Thurston obstructions. By Theorem \ref{zhang}, $\hat{P}_\theta\sqcup P_c$ is combinatorially equivalent to some rational map $G\in R_\theta^{geom}$. The proof is complete by setting $\phi_1:=\psi_1\circ\hat{H}$ and $\phi_2:=\psi_2\circ\hat{H}$.
\end{proof}

Following Lemma \ref{thurston-equi}, we still say $F=P_\theta\sqcup P_c$ is combinatorially equivalent to $G$ if the conditions in Lemma \ref{thurston-equi} hold.

\section{The construction of the continuous semi-conjugacy}

Let $f:\EC\to\EC$ be a rational map of degree at least 2. A sequence $\{V_n\}_{n\ge 0}$ is called a pullback sequence of a Jordan disk $V_0$ under $f$,
if $V_{n+1}$ is a connected component of $f^{-1}(V_n)$ for all $n\ge 0$. Let $\diam_{\EC}(\cdot)$ and $\dist_{\EC}(\cdot,\cdot)$ denote respectively the diameter and distance with respect to the spherical metric. 

In this section, we will show the existence of the continuous semi-conjugacy from the formal mating $F$ to the rational map $G$. It suffices to show the uniform convergence of the homotopy lifting of the equivalence maps using the expanding property of $G$ in the following two lemmas:

\begin{lem}[Classical shrinking lemma. \cite{LM97} (see also \cite{Man93}, \cite{TY96})]\label{shrinking lemma}
Let $d\ge1$ and $U_0$, $V_0$ be two Jordan disks in $\EC$. Suppose $U_0$ is not contained in any rotation domain of $f$ and $V_0$ is compactly contained
in $U_0$. Then for any $\epsilon>0$, there exists an $N\ge 1$ such that for any pullback sequence $\{U_n\}_{n\ge 0}$ satisfying 
$\deg (f^{n}:U_n\to U_0)\le d$, $\diam_{\EC}(V_n)<\epsilon$
for all $n\ge N$, where $V_n$ is any component of $f^{-n}(V_0)$ contained in $U_n$.
\end{lem}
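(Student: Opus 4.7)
The plan is to argue by contradiction. Suppose the conclusion fails, so there exist $\epsilon_0>0$ and a subsequence $n_k\to\infty$ with components $V_{n_k}$ satisfying $\deg(f^{n_k}:U_{n_k}\to U_0)\le d$ and $\diam_{\EC}(V_{n_k})\ge\epsilon_0$. First I would thicken the target: choose a Jordan disk $W_0$ with $V_0\Subset W_0\Subset U_0$, and let $W_n\subset U_n$ denote the component of $f^{-n}(W_0)$ containing $V_n$. The degree bound transfers, so $f^n:W_n\to W_0$ is a proper branched cover of degree at most $d$, and since $V_{n_k}\subset W_{n_k}$ one has $\diam_{\EC}(W_{n_k})\ge\epsilon_0$. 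It then suffices to force $\diam_{\EC}(W_{n_k})\to 0$, because $V_{n_k}$ is smaller.

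The heart of the proof is to extract a non-constant holomorphic inverse branch in the limit. By the Riemann--Hurwitz formula, each cover $f^n:W_n\to W_0$ carries at most $2d-2$ critical values in $W_0$. Passing to a further subsequence I may assume these finite sets converge in $\overline{W_0}$ to a set $C_\infty$ of cardinality at most $2d-2$. I then choose a Jordan subdisk $W_0'$ with $V_0\Subset W_0'\Subset W_0\setminus C_\infty$ (mildly shrinking $V_0$ inside a fixed compact subset of $W_0$ if $V_0$ meets $C_\infty$). For all sufficiently large $k$, the restriction of $f^{n_k}$ to the component $W_{n_k}'$ of $(f^{n_k})^{-1}(W_0')$ meeting $V_{n_k}$ is an unramified cover of degree at most $d$, whose sheet containing $V_{n_k}$ furnishes a univalent inverse branch $\psi_k:W_0'\to\EC$ with $\psi_k(V_0)=V_{n_k}$. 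The family $\{\psi_k\}$ is univalent with $\diam_{\EC}(\psi_k(V_0))\ge\epsilon_0$, so Koebe's distortion theorem prevents collapse to constants, and Montel's theorem yields, along a further subsequence, a non-constant holomorphic limit $\psi_\infty:W_0'\to\EC$, necessarily univalent with image in $\overline{U_0}$.

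To derive the contradiction, I look at the forward iterates on $\psi_\infty(W_0')$. Fix a ball $B\Subset W_0'$; for large $k$ we have $\psi_\infty(B)\subset \psi_k(W_0')$, so $f^{n_k}$ is defined and holomorphic on $\psi_\infty(B)$. Because $f^{n_k}\circ\psi_k=\mathrm{id}$ and $\psi_k\to\psi_\infty$ uniformly, the equicontinuity of $\{f^{n_k}\}$ on the compact set $\overline{\psi_\infty(B)}$ forces $f^{n_k}|_{\psi_\infty(B)}\to\psi_\infty^{-1}$ locally uniformly along a subsequence; thus $\{f^{n_k}\}$ is normal on $\psi_\infty(B)$ with a non-constant subsequential limit. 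This places $\psi_\infty(B)$ in the Fatou set of $f$, and rules out attracting, super-attracting and parabolic Fatou components (on which all subsequential limits of iterates are constants). Hence the Fatou component $\Omega$ containing $\psi_\infty(B)$ is a rotation domain, and on $\Omega$ every normal limit of iterates is (in linearizing or Herman-ring coordinates) a rotation mapping $\Omega$ onto itself; therefore $B=\psi_\infty^{-1}(\psi_\infty(B))\subset\Omega$. Letting $B$ exhaust $W_0'$ and then $W_0$ exhaust $U_0$ in the initial reduction, connectedness of $U_0$ forces $U_0\subset\Omega$, contradicting the hypothesis.

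The main obstacle is technical rather than conceptual: controlling the branching of $f^n:W_n\to W_0$ in order to get univalent inverse branches on a common domain. Critical values of $f^n$ can accumulate inside $V_0$ as $n\to\infty$, and without first passing to the shrunken disk $W_0'$ disjoint from the limit set $C_\infty$ one cannot ensure the covers become unramified. The Riemann--Hurwitz bound of $2d-2$ critical values per level is exactly what makes such a choice of $W_0'$ possible; once univalence of the $\psi_k$ is in hand, Koebe, Montel, and the Fatou-component dichotomy carry out the rest of the argument essentially formally.
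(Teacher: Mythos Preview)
The paper does not prove this lemma; it is quoted as a classical result with references to \cite{LM97}, \cite{Man93}, \cite{TY96}. So there is no ``paper's own proof'' to compare with. Your strategy---assume the pullbacks do not shrink, extract univalent inverse branches on a disk avoiding the (finitely many) limit critical values, pass to a non-constant limit via Koebe/Montel, and identify the image as lying in a rotation domain---is indeed the classical one appearing in those references.

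That said, two steps in your write-up are not correct as stated. First, if the limit critical value set $C_\infty$ meets $V_0$, you cannot choose a Jordan disk $W_0'$ with $V_0\Subset W_0'\Subset W_0\setminus C_\infty$, and your parenthetical fix of ``mildly shrinking $V_0$'' does not rescue the argument: establishing $\diam_{\EC}(V_n'')\to 0$ for a smaller $V_0''\subset V_0$ says nothing about the original components $V_n$, which are larger. The standard remedy is either to work with the Riemann maps $\eta_k:\D\to U_{n_k}$ directly (the composition $f^{n_k}\circ\eta_k$ is a Blaschke product of bounded degree, so normality of $\{\eta_k\}$ gives the non-constant limit without ever isolating critical values), or to use the annulus $W_0\setminus\overline{V_0}$ and the modulus inequality $\mathrm{mod}(W_{n_k}\setminus\overline{V_{n_k}})\ge \mathrm{mod}(W_0\setminus\overline{V_0})/d$ to control the geometry.

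Second, the final exhaustion ``letting $W_0$ exhaust $U_0$'' is not legitimate: the subsequence $\{n_k\}$, the set $C_\infty$, and the disk $W_0'$ all depend on $W_0$, so you cannot vary $W_0$ after the fact. What you have actually shown is only $W_0'\subset\Omega$ for some rotation domain $\Omega$, which does not contradict the hypothesis that $U_0$ is not \emph{contained} in a rotation domain. The correct way to close the argument is to observe that once $V_0\subset\Omega$, your inverse branches $\psi_k$ coincide (for large $k$) with the rotations $R_k^{-1}$ on $\Omega$, hence $V_{n_k}\subset$ (a rotation domain in the cycle of $\Omega$); then rerun the argument with any larger $V_0^+\Subset U_0$ (the diameter lower bound persists since $V_n\subset V_n^+$), use that rotation domains are finitely many Fatou components to pin down the same $\Omega$, and exhaust $U_0$ by such $V_0^+$.
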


\begin{lem}[see the Main Lemma of \cite{WYZZ23}]\label{contraction}
Let $f$ be a rational map with degree at least two and having a fixed bounded type Siegel disk $D_f$.  Suppose $\dist_{\EC}(\MP(f)\setminus\partial{D_f},\partial{D_f})>0$.  Then for any $\epsilon>0$ and any Jordan domain $V_0\subset \EC\setminus\overline{D_f}$ with $\overline{V}_0 \cap \MP(f)\neq\emptyset$ and $\overline{V}_0 \cap \MP(f) \subset \partial D_f$, there exists an $N \ge 1$  depending only on $\epsilon$, $V_0$ and $f$, such that $\diam_{\EC}(V_n)<\epsilon$ for all $n\geq N$, where $V_n$ is any connected component of $f^{-n}(V_0)$.
\end{lem}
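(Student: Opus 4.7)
My plan is an argument by contradiction that combines the classical shrinking lemma (Lemma \ref{shrinking lemma}) with a ``near-rotation'' puzzle around $\partial\Delta$ built from the bounded-type combinatorics. The first observation is that $V_0$, and therefore every pullback component $V_n$, is disjoint from $\overline{\Delta}$, because $\Delta$ is $f$-invariant and $V_0\subset\EC\setminus\overline{\Delta}$. By the hypothesis $\dist_{\EC}(\MP(f)\setminus\partial\Delta,\partial\Delta)>0$ there is a collar neighborhood $U$ of $\partial\Delta$ with $U\cap\MP(f)=U\cap\partial\Delta$; in particular, no postcritical point other than those on $\partial\Delta$ itself lies in $U$.

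The first step is to construct, inside $U$, a nested sequence of Jordan annuli $A_n$ around $\partial\Delta$ together with the Jordan disks $P_n\supset\overline{\Delta}$ bounded by their outer boundaries, such that $\bigcap_n P_n=\overline{\Delta}$ and the moduli of the $A_n$ are uniformly bounded below by some $m>0$. Such a nest exists because, by G.~Zhang's theorem on bounded-type Siegel disks, $\Delta$ is a quasi-disk and $f|_{\partial\Delta}$ is quasi-symmetrically conjugate to the rigid rotation $\rho_\theta$; one can transport the nested combinatorial structure of the closest returns of $\rho_\theta$ from the unit circle to $\partial\Delta$ via this quasi-symmetry, and thicken slightly using the hyperbolic metric of $\EC\setminus\overline{\Delta}$ to obtain the annuli $A_n$.

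Now suppose, for contradiction, that there exist $\varepsilon_0>0$ and pullback components $V_{n_k}$ with $\diam_{\EC}(V_{n_k})\ge\varepsilon_0$ along some subsequence $n_k\to\infty$. Passing to a further subsequence, $\overline{V}_{n_k}$ Hausdorff-converges to a continuum $K\subset\EC\setminus\Delta$ of diameter at least $\varepsilon_0$. If $K\cap\partial\Delta=\emptyset$, then for large $k$ each $V_{n_k}$ lies in a compact set bounded away from $\partial\Delta$; choosing a slight Jordan thickening $\widehat{V}_0\supset\overline{V}_0$ which avoids $\MP(f)\setminus\partial\Delta$, the pullback degrees $\deg(f^{n_k}:\widehat{V}_{n_k}\to\widehat{V}_0)$ are uniformly bounded, and Lemma \ref{shrinking lemma} forces $\diam_{\EC}(V_{n_k})\to 0$, a contradiction. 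If instead $K$ meets $\partial\Delta$, the puzzle pieces from the first step should trap $V_{n_k}\subset P_N$ for every fixed $N$ once $k$ is large, via a modulus estimate using the definite annuli $A_N$ separating $V_{n_k}$ from the outer side; choosing $N$ with $\diam_{\EC}(P_N)<\varepsilon_0$ yields the desired contradiction.

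The main obstacle is the second case. The difficulty is that branches of $f^{-n}$ may pass arbitrarily close to the critical points of $f$ on $\partial\Delta$, so the annuli $A_N$ pull back with ramification and a naive Gr\"otzsch-type modulus bound loses a factor of the local degree at each critical-point encounter. This is precisely where the bounded-type hypothesis is essential: it bounds the combinatorial complexity of the closest returns of $f|_{\partial\Delta}$, so only boundedly many consecutive puzzle levels are crossed between critical passages and a definite modulus is recovered within a uniform number of steps. Making this recovery quantitative, and verifying that every pullback component that approaches $\partial\Delta$ is indeed captured by the shrinking nest $\{P_N\}$, is the technical heart of the argument and is the reason the lemma requires the full strength of the bounded combinatorics.
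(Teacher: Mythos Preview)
The paper does not prove this lemma; it quotes it as the Main Lemma of \cite{WYZZ} and uses it as a black box, so there is no in-paper argument to compare against. Your outline has the right overall shape---classical shrinking away from $\partial\Delta$, bounded-type combinatorics near it---but both halves of your dichotomy have real gaps. In Case~1, knowing that the terminal component $V_{n_k}$ sits in a compact set away from $\partial\Delta$ does not bound $\deg(f^{n_k}:\widehat V_{n_k}\to\widehat V_0)$: since $\overline{V}_0$ itself meets $\partial\Delta$ (that is where its postcritical points lie), any genuine Jordan thickening $\widehat V_0\supset\overline{V}_0$ contains an arc of $\partial\Delta$, and the intermediate images $f^{j}(\widehat V_{n_k})$ for $0<j<n_k$ can pass over the boundary critical points arbitrarily many times regardless of where $V_{n_k}$ ends up. Lemma~\ref{shrinking lemma} gives nothing without a degree bound along the whole orbit, and no such bound follows from the location of the endpoint alone.

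In Case~2, the nest $\{P_N\}$ you build is purely geometric---a quasisymmetric transport of the closest-return partition of the circle, thickened in the hyperbolic metric---and carries no dynamical meaning, so there is no mechanism forcing a pullback $V_{n_k}$ to be trapped in any $P_N$; a priori $V_{n_k}$ could be a long thin set threading through many levels of your nest. For a modulus argument to bite, the puzzle pieces must themselves be iterated pullbacks of a fixed finite collection under $f$, so that the separating annuli are dynamically related with controlled ramification; this is precisely the Petersen-style construction carried out in \cite{WYZZ}. Your final paragraph correctly identifies this as the technical heart, but what precedes it is not yet a reduction to that heart: both branches of your dichotomy already require the dynamical puzzle, not just the second one.
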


In particular, Lemma \ref{shrinking lemma} holds when $\overline{V}_0\cap \MP(f)=\emptyset$ and $V_0$ is not contained in any rotation domain, while Lemma \ref{contraction} can handle the case that $V_0$ is adjacent to the Siegel disk of $f$.

\subsection{Homotopy lifting}

Denote by $K(P_\theta)$ and $K(P_c)$ the filled Julia sets of $P_\theta$ and $P_c$, and by $\mathring{K}(P_\theta)$ and $\mathring{K}(P_c)$ the interior of
$K(P_\theta)$ and $K(P_c)$ respectively. For a quadratic rational map $G \in R_\theta^{geom}$, let $J(G)$ represent the Julia set of $G$, and let $D_G$ denote the Siegel disk of $G$.

First, suppose that $P_c$ has a super-attracting cycle $x_1$, $\cdots$, $x_p$ with period $p\ge 1$. For any $1\le i\le p$, we can take a topological disk $D_i$ containing $x_i$ such that:
\begin{enumerate}
    \item[(i)] $\overline{D_i}\cap \overline{D_j}=\emptyset$ for $1\le i\not=j\le p$ and all $\partial D_i$ are real analytic curves;
    \item[(ii)] for each $D_i$, there is an annulus $A_i$ surrounding it with $\partial D_i$ being the inner component of $\partial A_i$ and $\overline{A_i}\cap \mathcal{P}(P_c)=\emptyset$;
    \item[(iii)] $P_c$ is holomorphic in $\overline{D_i}\cup A_i$ and maps $\overline{D_i}\cup A_i$ into some $D_j$.
\end{enumerate}
These $D_i$ are called holomorphic disks of $P_c$ corresponding to the Jordan disks in condition (c) of Lemma \ref{thurston-equi} according to \cite{Zha22}.

By the definition of combinatorial equivalence in Lemma \ref{thurston-equi}, we can construct a homotopy $H_1$ such that:
\begin{enumerate}[label =(\arabic*)]
\item $H_1:S^2\times[0,1]\to \EC$ is continuous;
\item $H_1(\cdot,s)$ is a homeomorphism for any $s\in [0,1]$;
\item $H_1(\cdot,0)=\phi_1$, $H_1(\cdot,1)=\phi_{2}$;
\item $H_1(x,s)=\phi_1(x)$ for $x\in \overline{D_{P_\theta}} \cup \cup_{i=1}^p\overline{D_i}$, $s\in [0,1]$.
\end{enumerate}

Next, we define $\phi_n$ and $H_{n}$ inductively. Suppose we have $\phi_{n-1}$, $\phi_{n}$ and $H_{n-1}$ satisfying the same properties as above. Consider $F$ and $G$ as branched coverings, we can lift $H_{n-1}$ by $F$ and $G$ to obtain $H_{n}:S^2\times [0,1]\to \EC$ such that: $H_{n}$ is continuous; $H_{n}(\cdot,0)=\phi_{n}$; and 
\begin{equation}\label{commutate}
    H_{n-1}(F(x),s)=G(H_{n}(x,s)).
\end{equation}
Then define $\phi_{n+1}:=H_{n}(\cdot,1)$. It is easy to check the following properties:
\begin{enumerate}
        \item[(1)] $H_n:S^2\times[0,1]\to \EC$ is continuous;
        \item[(2)] $H_n(\cdot,s)$ is a homeomorphism for any $s\in [0,1]$;
        \item[(3)] $H_n(\cdot,0)=\phi_n$, $H_n(\cdot,1)=\phi_{n+1}$;
        \item[(4)] $H_n(x,s)=\phi_n(x)$ for $x\in F^{-(n-1)}(\overline{D_{P_\theta}}\cup\cup^p_{i=1}\overline{D_i})$, $s\in [0,1]$.
\end{enumerate} 

Note that $\phi_1(\overline{D_i})$ and $\phi_1(\overline{D_j})$ are mutually disjoint closed Jordan disks because $\overline{D_i}\cap \overline{D_j}=\emptyset$ and $\phi_1$ is a homeomorphism. Moreover, it is clear that $\phi_1(\overline{D_i})\cap\overline{D_G}=\emptyset$ for any $ 1\le i,j\le p$.

If the critical point $x_0$ is strictly preperiodic, then, as mentioned earlier, $J(P_c)$ is a dendrite in this case. By Lemma \ref{thurston-equi} and the homotopy lifting property, we obtain $\phi_{n}$ and $\phi_{n+1}$ for $n\ge 1$ which satisfy the following properties:   
\begin{enumerate}
    \item $\phi_n:S^2\to \EC$ is an orientation preserving homeomorphism;
    \item $\phi_n\circ F= G\circ \phi_{n+1}$;
    \item $\phi_n=\phi_{n+1}$ on $F^{-(n-1)}(\overline{D_{P_\theta}}\cup\mathcal{P}(P_c))$, $\phi_n(\mathcal{P}(F))=\mathcal{P}(G)$ and $\phi_n$ is conformal in $F^{-(n-1)}(D_{P_\theta})$.
\end{enumerate}

\subsection{A decomposition of the convergence region of $\phi_n$}

\begin{defi}[Drop-chain]
For the quadratic polynomial $P_\theta$, we denote the Siegel disk $U_0:=D_{P_\theta}$ as the $0$-\emph{drop} of $P_\theta$. The unique $1$-drop is defined as $U_1:=P^{-1}_{\theta}(D_{P_\theta})\setminus D_{P_\theta}$, which is a Jordan domain attached at the critical point $x_\theta:=- e^{2\pi\ii \theta}/2$. More generally, for any $n\ge 1$, each connected component $U$ of $P_{\theta}^{-(n-1)}(U_1)$ is a Jordan domain called an $n$-\emph{drop}, with $n$ representing the \textit{depth} of $U$. The map $P_{\theta}^{\circ n}:U\to D_{P_\theta}$ is conformal. 

Let $U$ and $V$ be two distinct drops of depths $m$ and $n$, respectively. Then $\overline{U}\cap \overline{V}$ is either empty or a singleton. In the latter case, we necessarily have $m\neq n$. If $m<n$, then $\overline{U}$ and $\overline{V}$ intersect at a preimage of the critical point $x_\theta$, and we say that $U$ is the \textit{parent} of $V$. 

Consider a sequence of drops $\{U_0, U_{\iota_1}$, $U_{\iota_1 \iota_2}$, $\cdots\}$, where each $U_{\iota_1 \iota_2 \cdots \iota_k}$ is the parent of $U_{\iota_1 \iota_2 \cdots \iota_{k+1}}$. The closure of the union of this sequence is defined as 
\begin{equation}
\MC=\bigcup\limits_k \overline{U_{\iota_1 \iota_2 \cdots \iota_k}}.
\end{equation}
This set $\MC$ is termed a \emph{drop-chain}.
\end{defi}

According to \cite{Pet96} and the above definitions, every point in $K(P_{\theta})$ either belongs to the closure of a drop or is the landing point of a unique drop-chain. Furthermore, different drop-chains land at distinct points. Additionally, every external ray $R_t(P_\theta)$ with $t\in\Q$ lands at a point $x\in J(P_\theta)$, which is also the landing point of a unique drop-chain, since every point on the boundary of bounded Fatou components (drops) of $P_\theta$ is not eventually periodic.  

\begin{prop}\label{graph}
For the formal mating $ F = P_\theta \sqcup P_c $, there exists a closed connected set $ T $ consisting of the closure $ \overline{D_{P_\theta}} $, finitely many drop chains of $ P_\theta $, and joined external rays of $F$, along with finitely many holomorphic disks and segments in the Fatou components of $ P_c $ if $ P_c $ has a super-attracting cycle, such that
\begin{itemize}
    \item the restriction $ \phi_n|_T $ converges uniformly to a limit map $ \phi|_T $;
    \item the complement $ \EC \setminus T $ contains finitely many connected components $ U $, where the spherical diameter of each preimage $ G^{-n}(\phi_m(U)) $ converges to zero as $ n $ goes to infinity for a fixed integer $ m $. 
\end{itemize}
\end{prop}

\begin{proof}
For a postcritically finite polynomial $P$ of degree at least two, the Hubbard tree $H\subset K(P)$ is the minimal tree that contains all points of $ \MP(P) $ such that $ P(H) \subset H $ (see \cite[Chapter 4]{DH84}).

We first assume that $ P_c $ has a super-attracting cycle $x_0=x_p, x_1, \ldots, x_{p-1} $ with period $ p > 1 $ (the case $p = 1$ is trivial). Let $\nu(i)$ denote the number of branches of the tree $H$ at the point $x_i$. According to \cite[Proposition 4.4]{DH84}, if $p > 1$, there exists a $2\leq r \leq p$ such that $\nu(i) = 1$ for $1\leq i \leq r $ and $\nu(i)=2$ for $r<i\leq p$.

In the following, we will divide the proof into five steps.

\textbf{Step 1.} Consider a vertex $x_i$ of $H$ with $\nu(i)=1$. By Lemma \ref{thurston-equi}, there exists a holomorphic disk $D_i$ containing $x_i$ such that $\phi_1|_{D_i}=\phi_2|_{D_i}$.
Let $\Omega_{x_i}$ be the Fatou component of $P_c$ containing $x_i$ and denote  $x_i^1=\partial{\Omega_{x_i}}\cap H$. 
The point $x_i^1$ is a repelling periodic point and there are two external rays of $P_c$ landing at $x_i^1$. Consequently, there are two jointed external rays of $F$ landing at $x_i^1$, which we denote as $R_{t^i_1}$ and $R_{t^i_2}$. Since $x_i^1$ is a periodic point, the angles $t^i_1$ and $t^i_2$ are rational angles. Then each jointed external ray $R_{t^i_1}$ (resp. $R_{t^i_2}$) corresponds to a unique drop-chain $\MC_{t^i_1}$ (resp. $\MC_{t^i_2}$).
At this point, we observe that the union $\overline{D_{P_\theta}} \cup R_{t^i_1} \cup R_{t^i_2}\cup \MC_{t^i_1} \cup \MC_{t^i_1}$ effectively separates $x_i$ from the other postcritical points.
Let $\lfloor x_i^1,D_i\rfloor$ be the segment connecting $x_i^1$ with $D_i$ that lies within $\Omega_{x_i}\cap H$. Define $U^i$ as the component of 
$$\EC\setminus( \overline{D_{P_\theta}}\cup R_{t^i_1} \cup R_{t^i_2}\cup \MC_{t^i_1} \cup \MC_{t^i_2}\cup \overline{D_i} \cup \lfloor x_i^1,D_i\rfloor)$$
with $\partial D_i\subset \partial U^i $. Then $U^i$ is a simply connected domain such that $\MP(F)\cap U^i=\emptyset$. 

\textbf{Step 2.} Consider a vertex $x_j$ of $H$ with $\nu(j)=2$, there exists a holomorphic disk $D_j$ containing $x_j$ such that $\phi_1|_{D_j}=\phi_2|_{D_j}$.
Again, let $\Omega_{x_j}$ denote the Fatou component of $P_c$ that contains $x_j$. In this case, $\Omega_{x_j}$ intersects $H$ at two points, which we denote as $x_j^1$ and $x_j^2$. Both $x_j^1$ and $x_j^2$ are periodic points, with two external rays of $P_c$ landing at each. These external rays are contained in four jointed external rays. We denote them as $R_{t^j_{11}}$ and $R_{t^j_{12}}$(for $x_j^1$) and $R_{t^j_{21}}$ and $R_{t^j_{22}}$(for $x_j^2$). Each jointed external ray $R_{t^j_{\iota_1\iota_2}}$ corresponds to a unique drop-chain $\MC_{t^j_{\iota_1\iota_2}}$, given that $x_j^{\iota_1}$ is periodic, where $\iota_1,\iota_2\in\{1,2\}$.
Let $\lfloor x_j^1,D_j\rfloor$ (resp. $\lfloor x_j^2,D_j\rfloor$) be the segment connected $x_j^1$ (resp. $x_j^2$) with $D_j$, contained within $\Omega_{x_j}\cap H$.
Define $U^{j1}$ (resp. $U^{j2}$) as the component of 
$$\EC\setminus(\bigcup_{\iota_1,\iota_2=1}^2(R_{t^j_{\iota_1\iota_2}}\cup \MC_{t^j_{\iota_1\iota_2}})\cup\overline{D_{P_\theta}}\cup \overline{D_j} \cup \lfloor x_j^1,D_j\rfloor\cup\lfloor x_j^2,D_j\rfloor)$$
such that $R_{t^j_{11}}$ (resp. $R_{t^j_{21}}$) and part of $\partial{D_j}$ are contained in $\partial{U^{j1}}$ (resp. $\partial{U^{j2}}$).

\textbf{Step 3.} Let 
\begin{center}
$T:=\overline{D_{P_\theta}}\cup \bigcup_{i}(\bigcup_{\iota_1=1}^2(R_{t^i_{\iota_1}}\cup \MC_{t^i_{\iota_1}})\cup \overline{D_i} \cup \lfloor x_i^1,D_i\rfloor)\cup$ \\
$\bigcup_{j}(\bigcup_{\iota_1,\iota_2=1}^2(R_{t^j_{\iota_1\iota_2}}\cup \MC_{t^j_{\iota_1\iota_2}})\cup \overline{D_j} \cup \lfloor x_j^1,D_j\rfloor\cup\lfloor x_j^2,D_j\rfloor).$
\end{center}
Then $T$ is a closed connected set. The complement 
$$\EC\setminus (T\cup \cup_i U^i \cup \cup_j (U^{j1}\cup U^{j2}))$$ 
has finitely many connected components, which we denoted by $\tilde{U}^k$. An example is illustrated in Figure \ref{T}.

\begin{figure}
    \centering
    \includegraphics[width=0.8\linewidth]{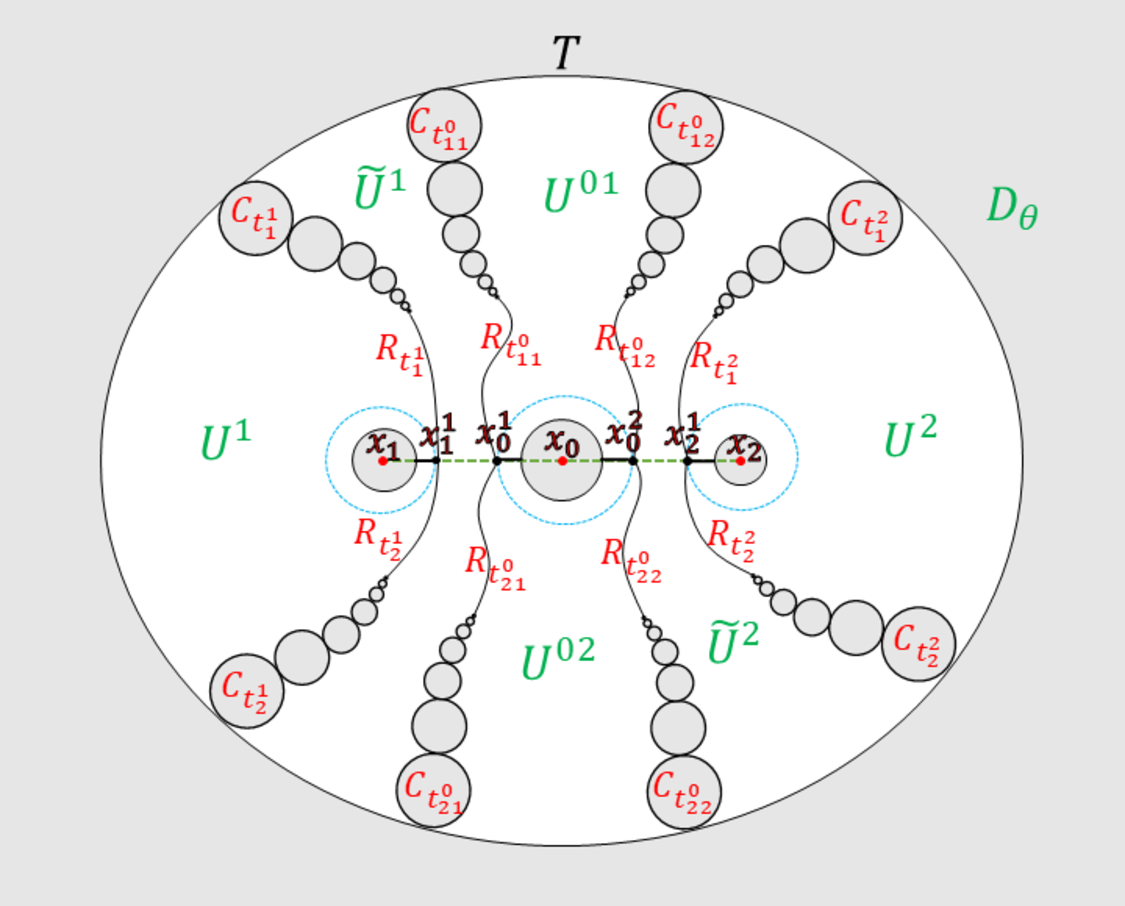}
    \caption{The construction of $T$ for $F=P_{\theta}\sqcup P_c$ with $c=-1.75488\cdots$ such that the critical point $x_0=0$ is periodic with period $3$, and $x_1=P_c(0)$, $x_2=P_c^{2}(0)$. We place the center of the Siegel disk of $F$ at infinity. The set $T$ consists of the gray regions and the black boundaries. The constructions of $U^1$ and $U^2$ for $x_1$ and $x_2$ follow Step 1, while the constructions of $U^{01}$ and $U^{02}$ for $x_0$ follow Step 2. The remaining parts are two Jordan domains $\tilde{U}^1$ and $\tilde{U}^2$.}
    \label{T}
\end{figure}

We claim that for any joined external ray $R_t$ of $F$, 
\begin{equation}\label{step_3}
    {\rm diam_{\EC}}(\phi_n(R_t)) \to 0  \hbox{  as  } n\to\infty.
\end{equation}
In fact, by Lemma \ref{thurston-equi} and the homotopy lifting property the following diagram is commutative:
\begin{equation*}
    \begin{tikzcd}
    R_t  \ar[r,"F^n"] \ar[d,"\phi_{n+1}"] & R_{2^n t} \ar[d,"\phi_1"]\\ 
    \phi_{n+1}(R_t)  \ar[r,"G^n"]   & \phi_1(R_{2^n t}).
    \end{tikzcd}
\end{equation*}
Here, $\phi_{n+1}(R_t)$ is one of the preimage components obtained by pulling back $\phi_1(R_{2^n t})$ under $G^{n}$. We know if an external ray in a ray-equivalence class is periodic (as a set), then all other rays in this class are also periodic with the same period. Then the critical point of $P_\theta$ and the critical point of $P_c$ (denoted by $x_\theta$ and $x_0$ respectively) cannot be in the same ray-equivalence class since the external rays landing at $x_\theta$ is not eventually periodic but the external rays landing at $x_0$ is eventually periodic. Furthermore, a point in $\mathcal{P}(P_c)$ and any $x\in\partial D_{P_\theta}$ cannot be in the same ray-equivalence class for the same reason. Therefore, the landing points of $\phi_1(R_{2^n t})$ cannot be contained in $\partial D_{G}$ and $\phi_1(\mathcal{P}(P_c))$ at the same time. 
By the construction of $T$, there are finitely many Jordan domains $V_0^i$'s satisfying the conditions in Lemmas \ref{shrinking lemma} and \ref{contraction}
such that $\phi_1(R_{2^n t})$ is contained in some $\overline{V_0^i}$ for any $t\in \R/\Z$. By Lemmas \ref{shrinking lemma} and \ref{contraction}, the conclusion (\ref{step_3}) holds.

\textbf{Step 4.} For each $\MC_t \cup R_t \subset T$, where $$\MC_t=\bigcup_k \overline{U^t_{\iota_1 \iota_2 \cdots \iota_k}},$$
let $x_t'$, $x_t''$ be the two landing points of $R_t$ such that $\overline{\MC_t}\cap R_t=\{x_t'\}$ and $x_t''$ is equal to one of $x_i^1$, $x_j^1$ or $x_j^2$ in Steps 1 and 2. We claim that there exists a periodic point $z_t$ such that for any $x\in R_t$, $\phi_n(x)$ converges to $z_t$. 

Let $\Omega_i$ be the fixed Fatou component of $F^q$ such that $x_i\in D_i\subset \Omega_i$ and $x_t''\in \partial \Omega_i$. Replacing $F^q$ by $\hat{F}$, the Fatou component $\Omega_i$ becomes a fixed super-attracting domain of $\hat{F}$. We can then take a sequence $\{y_n\}_{n=1}^{\infty}$ such that:
\begin{enumerate}
    \item $y_n\in \Omega_i$ for any $n\ge 1$,
    \item $\lim_{n\to \infty} y_n=x_t''$,
    \item $y_1\in D_i$ and $\hat{F}(y_{n+1})=y_n$ for any $n\ge 1$.
\end{enumerate}
Replacing the sequence $\{\phi_n\}_{n\ge 1}$ with $\{\phi_{(n-1)p+1}\}_{n\ge 1}$. For simplicity, we will continue to denote the family of maps $\{\phi_{(n-1)p+1}\}_{n\ge 1}$ as $\{\phi_n\}_{n\ge 1}$. From (c), we know that $\phi_{n}(y_n)=\phi_{n+1}(y_n)$ for any $n\ge 1$, where $\phi_n\circ F^q=G^q\circ \phi_{n+1}$. Enclosing $\{y_n\}_{n=1}^{\infty}$ and $x_t''$ within a Jordan domain $U_1$ such that Lemma \ref{shrinking lemma} can be applied to $V_1=\phi_1(U_1)$, we conclude that for any $\epsilon>0$, there exists an $N_1 \ge 1$ such that $\diam_{\EC}(V_n)<\epsilon$ for all $n\ge N$, where $V_n$ is any component of $\hat{G}^{-n}(V_1)$. Since $x_t''$ is a repelling fixed point of $\hat{F}$ and $\hat{F}(y_{n+1})=y_n$, we can choose $V_n$ such that $\phi_n(y_n)\in V_n$ and $\phi_n(x_t'')\in V_n$. Therefore, for any $m,n$ with $m>n\ge N_1$, we have:
\begin{equation*}
\begin{split}
    &\dist_{\EC}(\phi_n(x_t''),\phi_m(x_t''))\\
    \le~ &\dist_{\EC}(\phi_n(x_t''),\phi_n(y_n))+\dist_{\EC}(\phi_n(y_n),\phi_m(y_n))\\
    &+\dist_{\EC}(\phi_m(y_n),\phi_m(y_m))+\dist_{\EC}(\phi_m(y_m),\phi_m(x_t''))\\
    \le~&\diam_{\EC}(V_n)+0+\dist_{\EC}(\phi_m(y_n),\phi_m(y_m))+\diam_{\EC}(V_m).
\end{split}
\end{equation*}
Since for any $y\in\Omega_i$, $\phi_n(y)$ converges as $n\to \infty$, the sequence $\{\phi_n|_{\Omega_i}\}_{n\ge 1}$ forms a normal family. Meanwhile, since $\{y_n\}_{n=1}^{\infty}$ is a Cauchy sequence, there exists $N_2$ such that for all $m>n\ge N_2$, we have
$$\dist_{\EC}(\phi_m(y_n),\phi_m(y_m))< \epsilon$$ 
by the Arzel\`{a}–Ascoli theorem (see \cite{Ah79}). Thus,
\begin{equation*}
    \dist_{\EC}(\phi_n(x_t''),\phi_m(x_t''))<3\epsilon.
\end{equation*}
Therefore, $\phi_n(x_t'')$ converges to a point $z_t$. Combined with (\ref{step_3}), we conclude that $\phi_n(x)$ converges to a point $z_t$ for any $x\in R_t$. Since $R_t$ is periodic, it follows that $z_t$ is a periodic point of $G$ with period $q'|q$. By replacing $G^q$ with $\hat{G}$ for simplicity of notation, the point $z_t$ becomes a repelling fixed point of $\hat{G}$.

For each $\MC_t \cup R_t \subset T$, we can identify a Jordan domain $V_0$ such that $\phi_1(\MC_t \cup R_t) \subset \overline{V_0}$, $z_t \in V_0$, and $V_0$ satisfies the conditions outlined in Lemma \ref{contraction}. By Lemma \ref{contraction}, for any $\epsilon > 0$, there exists $N \in \mathbb{N}^+$ such that for all $n \geq N$, the spherical diameter of $V_n$ is less than $\epsilon$, where $V_n$ is any component of $\hat{G}^{-n}(V_0)$. Let $\hat{V}_n$ denote the component of $\hat{G}^{-n}(V_0)$ that contains $z_t$. We take $\epsilon$ small enough such that $\hat{V}_n$ is contained in the linearization domain of the repelling fixed point $z_t$ for all $n\ge N$.

For any $s \in \mathbb{N}^+$ and $n \geq N$, if $x \in \bigcup\limits_{k=1}^{N} U_{\iota_1 \iota_2 \cdots \iota_k}^t$, we have $\phi_{n+1}(x) = \phi_{n+2}(x)$ for any $n\ge N$. If 
$$x \in R_t \cup \big(\MC_t \setminus \bigcup\limits_{k=1}^{N} U_{\iota_1 \iota_2 \cdots \iota_k}^t\big),$$ 
then $\phi_N(x)$ and $\phi_{N+s}(x)$ are contained a disk centered at $z_t$ with radius $\epsilon$. Thus, the following inequality holds: 
\[
{\rm dist}_{\EC}(\phi_N(x), \phi_{N+s}(x)) \leq {\rm dist}_{\EC}(\phi_N(x), z_t) + {\rm dist}_{\EC}(z_t, \phi_{N+s}(x)) \leq 2\epsilon.
\]
By the choice of $\epsilon$, there exists a constant $C_0\ge 1$ such that
\begin{equation}
    {\rm dist}_{\EC}(\phi_n(x), \phi_{n+s}(x))\le C_0\cdot{\rm dist}_{\EC}(\phi_N(x), \phi_{N+s}(x))
\end{equation}
for any $n\ge N$. Therefore, $\phi_n|_{\MC_t \cup R_t}$ converges uniformly as $n \to \infty$.

Meanwhile, the segment $\lfloor x_i^1, D_i \rfloor$ (resp. $\lfloor x_j^{\iota_1}, D_j \rfloor$) $\subset H$ lies within the Fatou set of $P_c$. The spherical diameter of the segment $\lfloor x_i^1, D_i \rfloor$ (resp. $\lfloor x_j^{\iota_1}, D_j \rfloor$, where $\iota_1 = 1, 2$) over which $\phi_n(x) \neq \phi_{n+1}(x)$ converges to $0$ as $n \to \infty$. Using the same reasoning, this convergence is uniform.

Furthermore, since $T$ is a finite graph, $\phi_n|_T$ converges uniformly. We denote this uniform convergence as $\phi_n|_T \to \phi|_T$ as $n \to \infty$.

\textbf{Step 5.} Let $m$ be a fixed integer. Each $\tilde{V}^k_m=\phi_m(\tilde{U}^k)$ is a Jordan domain with $\tilde{V}^k_m\cap \MP(G)=\emptyset$ and $\partial{\tilde{V}^k_m}\cap \MP(G)\subset \partial{D_{G}}$. By Lemma \ref{contraction} the spherical diameter of any connected component of $G^{-n}(\tilde{V}^k_m)$ converges to zero as $n$ goes to infinity. For each $V_m^i=\phi_m(U^i)$ (resp. $V_m^{j\iota_1}=\phi_m(U^{j\iota_1})$ with $\iota_1=1,2$) obtained in Step 1 (resp. Step 2), it is a simply connected domain (but not a Jordan domain) with no postcritical points in its interior. For simplification, we omit the superscript. By the Riemann-Hurwitz Formula, we have $\deg(G^n:V_{m+n}\to V_m)=1$, where $V_{m+n}$ is any connected component of $G^{-n}(V_m)$ and $V_m=V_m^i$ or $V_m^{j\iota_1}$. Moreover, since $V_m$ is contained within the union of finitely many Jordan domains satisfying the conditions in Lemma \ref{contraction}, it follows that $\diam_{\EC}(V_{m+n})\to 0$ as $n\to\infty$. 

It remains to consider the case where the critical point $x_0$ of $P_c$ is strictly preperiodic. The proof in this case is similar to that of the periodic case, but in a simpler way; we will highlight the differences at each step.

\begin{figure}[htbp]
  \setlength{\unitlength}{1mm}
  \subfigure{\includegraphics[width=.49\textwidth]{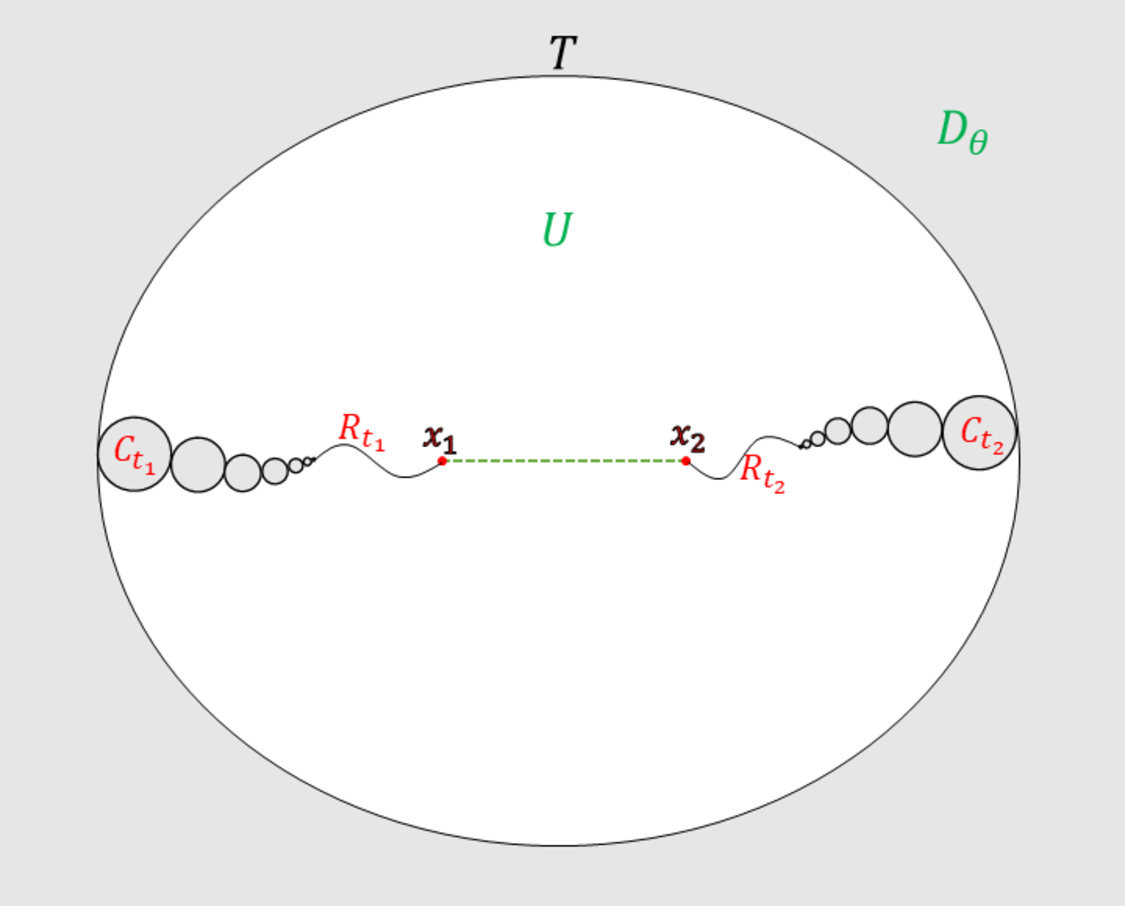}}
  \subfigure{\includegraphics[width=.49\textwidth]{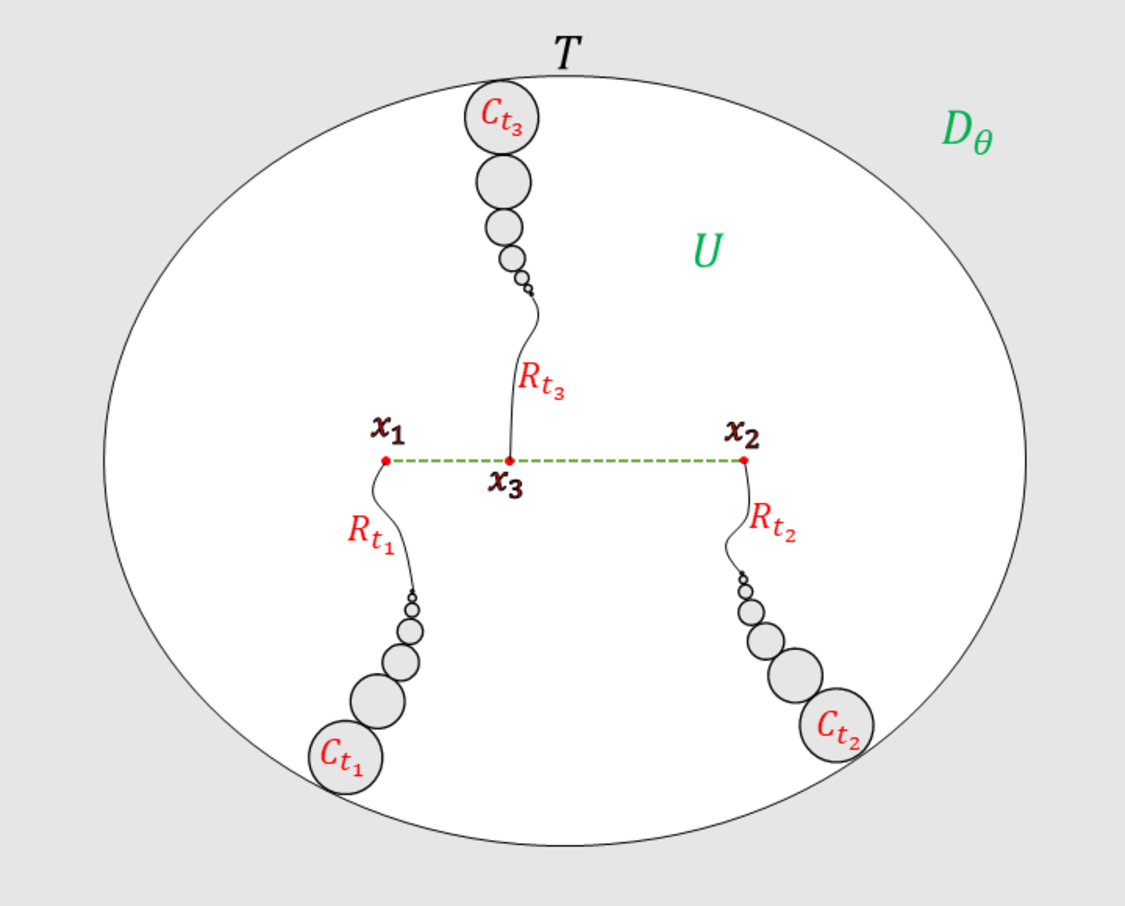}}
  \caption{The picture on the left shows the construction of $T$ for $F=P_{\theta}\sqcup P_c$ with $c=-2$. Here, $x_2$ is a fixed point with only one joined external ray $R_{t_2}$ from $F$ landing at it, while $x_1$ is another preimage of $x_2$ under $P_c$. The picture on the right displays the construction of $T$ for $F=P_{\theta}\sqcup P_c$ with $c=-1.54368\cdots$ such that $x_1=P_c(0)$, $x_2=P^2_c(0)$, and $x_3=P^3_c(0)$ is a fixed points of $P_c$. There are multiple external rays of $P_c$ landing at each $x_i$, from which we can select one to construct the set $T$.}
  \label{T_12}
\end{figure}

Differences in Step 1 and Step 2: For a vertex $x_i$ of $H$, we have $x_i\in J(P_c)$ such that at least one external ray lands at $x_i$. We denote one of these rays as $R_{t_i}(P_c)$. The corresponding joined external ray and drop-chain are $R_{t_i}$ and $\MC_{t_i}$.

Differences in Step 3: Let
\begin{center}
$T=\overline{D_{P_\theta}}\cup (\bigcup_{i}(R_{t_i} \cup \MC_{t_i})).$
\end{center}
See Figure \ref{T_12} for two examples. 

Differences in step 4: For each $\MC_t \cup R_t \subset T$, we can identify a Jordan domain $V_0$ such that $\phi_1(\MC_t \cup R_t\setminus \overline{U_{\iota_1}^t}) \subset \overline{V_0}$ and $V_0$ satisfies the conditions outlined in Lemma \ref{shrinking lemma}. 

Differences in step 5: The complement $U$ of $\EC\setminus T$ is a simply connected domain. While it has postcritical points on its boundary, but contains no postcritical points in its interior. Furthermore, $V_m=\phi_m(U)$ is contained within the union of finitely many Jordan domains that satisfy the conditions in either Lemma \ref{shrinking lemma} or \ref{contraction}.

Based on the above modifications, we can prove $\phi_n|_{T}$ converges uniformly to $\phi|_{T}$ and $\diam_{\EC}(V_{m+n})\to 0$ as $n\to\infty$ in the strictly preperiodic case. This concludes the proof.
\end{proof}

\subsection{Convergence of homotopy lifting}
\begin{lem}\label{siegel-semi-conjugate}
    There exists a continuous map $\phi: S^2\to \EC$ satisfying 
    \begin{enumerate}
        \item $\phi\circ F= G \circ \phi$;
        \item $\phi$ is a uniform limit of orientation preserving homeomorphisms;
        \item $\phi$ is conformal in $\mathring{K}(P_\theta)\sqcup\mathring{K}(P_c)$ onto $\EC\setminus J(G)$ and $\phi^{-1}(\EC\setminus J(G))$ $=\mathring{K}(P_\theta)\sqcup\mathring{K}(P_c) $.
    \end{enumerate}
\end{lem}

\begin{proof}
By Proposition \ref{graph}, $\phi_n(x)$ converges uniformly to $\phi(x)$ for $x\in T$. Furthermore, from the construction of $T$, we know that for any $\epsilon>0$, there exist $N\in \N^+$ and finitely many small disks $D^{\epsilon}_1,\cdots,D^{\epsilon}_r$ with $\diam_{\EC}(D^{\epsilon}_i)\le \epsilon$, such that for $m\ge N$ and any $s\in\N^+$, we have $\phi_m(x)=\phi_{m+s}(x)$ for $\phi_m(x)\in \phi_m(T)\setminus (\cup_{i=1}^r D^{\epsilon}_i)$. For any $s\in\N^+$, we consider the arc
$$\alpha_{m,s,x}=H_m(x,\cdot)\cup H_{m+1}(x,\cdot)\cup\cdots\cup H_{m+s-1}(x,\cdot):[0,1]\to \EC$$ connecting $\phi_m(x)$ with $\phi_{m+s}(x)$ for some fixed $x\in \EC$.
Since $\phi_n(x)$ converges uniformly to $\phi(x)$ for $x\in T$, we can choose $D^{\epsilon}_i$ so that for any $x\in T\cap D^{\epsilon}_i$,  we have $\alpha_{m,s,x}\in D^{\epsilon}_i$ for all $m\ge N$ and $s\in \N^+$, where $N$ is sufficiently large.

Let $x\notin T$. Then $x$ lies in some component $U$ of $\EC\setminus T$. There are two cases to consider:

\textbf{Case 1:} $\alpha_{m,s,x}\subset\phi_m(U)$. By Proposition \ref{graph}, the spherical diameter of any preimage component of $\phi_m(U)$ under $G^n$ converges to zero as $n\to \infty$. Therefore, the lifting of $\alpha_{m,s,x}$ is contained within these components. Thus, there exists $N'\in \N^+$, such that $\dist_{\EC}(\phi_{n}(x),\phi_{n+s}(x))\le \epsilon$ for any $n\ge N'$ and any $s\in \N^+$. This means that $\phi_n(x)$ converges uniformly to $\phi(x)$ for $x\in U$. 

\textbf{Case 2:} $\alpha_{m,s,x}\cap \partial{\phi_m(U)}\not=\emptyset$. Suppose $z'\in\alpha_{m,s,x}\cap \partial{\phi_m(U)}$. If $z'\in\phi_m(T)\setminus(\cup_{i=1}^r D^{\epsilon}_i)$, then $\phi_m(x)=\phi_{m+s}(x)$ for any $s\in \N^+$, which implies that $\phi_m(x)$ converges uniformly to $\phi(x)$ as $m\to \infty$. If $z'\in\cup_{i=1}^r D^{\epsilon}_i$, $\dist_{\EC}(\phi_m(x),\phi_{m+s}(x))<\epsilon$ for any $s\in \N^+$ by Lemma \ref{shrinking lemma}. This also means that $\phi_m(x)$ converges uniformly to $\phi(x)$ as $m\to \infty$. 

Thus, we have proven that $\phi_m(x)$ converges uniformly to $\phi(x)$ for any $x\in S^2$.
It is easy to verify that the limit map $\phi$ satisfies the conditions (a), (b) and (c).
\end{proof}

\subsection{Properties of the semi-conjugacy $\phi$}

\begin{lem}
    $\phi$ is surjective.
\end{lem}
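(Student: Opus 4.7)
The plan is to exploit the uniform convergence $\phi_n\to\phi$ established in the proof of Proposition \ref{siegel-semi-conjugate}, together with the fact that each $\phi_n:S^2\to\widehat{\C}$ is an orientation preserving homeomorphism, hence already surjective. The whole argument is a standard compactness argument: the only inputs are that $S^2$ is sequentially compact and that the uniform limit of continuous maps is continuous.

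First I would fix an arbitrary $y\in\widehat{\C}$ and aim to produce $x\in S^2$ with $\phi(x)=y$. For each $n\ge 1$ the homeomorphism $\phi_n$ hits $y$, so there exists $x_n\in S^2$ with $\phi_n(x_n)=y$. By the compactness of $S^2$, after passing to a subsequence we may assume $x_{n_k}\to x$ for some $x\in S^2$. Applying the triangle inequality with respect to the spherical metric yields
\[
\dist_{\widehat{\C}}\bigl(\phi(x),y\bigr)\le\dist_{\widehat{\C}}\bigl(\phi(x),\phi(x_{n_k})\bigr)+\dist_{\widehat{\C}}\bigl(\phi(x_{n_k}),\phi_{n_k}(x_{n_k})\bigr).
\]
The first term tends to zero because $\phi$ is continuous (being a uniform limit of continuous maps on a compact space), and the second term tends to zero because $\phi_n\to\phi$ uniformly on $S^2$. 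Therefore $\phi(x)=y$, and since $y$ was arbitrary, $\phi(S^2)=\widehat{\C}$.

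There is essentially no obstacle at this step: the genuine work was carried out in Proposition \ref{siegel-semi-conjugate}, which supplied both the continuity of $\phi$ and the uniform convergence needed here. No additional dynamical input about $F$ or $G$ is required.
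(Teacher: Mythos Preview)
Your argument is correct and is in fact more streamlined than the paper's. You use only the facts from Proposition~\ref{siegel-semi-conjugate}(b) that $\phi$ is a uniform limit of homeomorphisms and that $S^2$ is compact; no dynamics enters. The paper instead splits the target into the Fatou and Julia sets of $G$: for $z\in\EC\setminus J(G)$ it invokes Proposition~\ref{siegel-semi-conjugate}(c) to get a (unique) preimage, and for $z\in J(G)$ it observes that $\phi(\partial D_{P_\theta})=\partial D_G$, so the image contains a point whose backward orbit is dense in $J(G)$, and then appeals to continuity (equivalently, closedness of $\phi(S^2)$). Your route is more robust and entirely soft, while the paper's route records a little extra information along the way (injectivity of $\phi$ on the Fatou side), though that is already contained in Proposition~\ref{siegel-semi-conjugate}(c) and is not needed for surjectivity itself.
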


\begin{proof}
Since $\phi$ is a homeomorphism in the interior of $K(P_\theta )\sqcup K(P_c)$ and 
$$\EC\setminus J(G)=\cup_{n\ge 0}\phi_{n+1}( F^{-n}(D_{P_\theta}\cup \cup_i D_i)),$$ 
every $z\in \EC\setminus J(G)$ has a unique preimage under $\phi$. It remains to consider every $z\in J(G)$. Since $\phi:\partial D_{P_\theta} \to \partial D_G\subset J(G)$, for a $z_0\in \partial D_G$, $\cup_{n\ge 0}\{G^{-n}(z_0)\}$ is everywhere dense in $J(G)$ by \cite[Corollary 4.13]{Mil06}. Together with that $\phi$ is continuous, we conclude that $\phi$ is surjective.
\end{proof}

\begin{lem}\label{property}
For any $z\in\EC$, $\phi^{-1}(z)$ is closed, connected, and nonseparating.
\end{lem}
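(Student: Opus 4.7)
The three properties follow from the fact that $\phi$ is a uniform limit of orientation-preserving homeomorphisms $\phi_n\colon S^2\to\EC$ established in Proposition \ref{siegel-semi-conjugate}. Closedness is immediate since $\phi$ is continuous and $\{z\}$ is closed.

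For connectedness, the plan is to argue by contradiction. Suppose $\phi^{-1}(z)=A\sqcup B$ with $A$, $B$ nonempty, disjoint and compact. Choose disjoint open sets $U\supset A$ and $V\supset B$ with $\overline{U}\cap\overline{V}=\emptyset$, and set $K:=S^2\setminus(U\cup V)$. Then $K$ is compact and disjoint from $\phi^{-1}(z)$, so $\dist_{\EC}(z,\phi(K))>0$. Pick $a\in A$, $b\in B$; uniform convergence forces $\phi_n(a),\phi_n(b)\to z$, so for $n$ large one can choose an arc $\gamma_n\subset\EC$ joining $\phi_n(a)$ and $\phi_n(b)$ with $\diam_{\EC}(\gamma_n)\to 0$. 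Pulling back through the homeomorphism $\phi_n$ gives an arc $\phi_n^{-1}(\gamma_n)\subset S^2$ from $a\in U$ to $b\in V$, which must cross $\partial U\subset K$ at some $x_n$. Then $\phi_n(x_n)\in\gamma_n$ gives $\phi_n(x_n)\to z$, and any accumulation point $x^*\in K$ of $\{x_n\}$ satisfies $\phi(x^*)=z$ by continuity and uniform convergence, contradicting $z\notin\phi(K)$.

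For the non-separating property, assume $S^2\setminus\phi^{-1}(z)$ has at least two components and fix $q_1,q_2$ in distinct components $\Omega_1,\Omega_2$. Set $E_n:=\phi_n(\phi^{-1}(z))$; by uniform convergence and compactness of $\phi^{-1}(z)$, one has $E_n\subset\overline{B(z,\epsilon_n)}$ with $\epsilon_n\to 0$, in a fixed chart around $z$. Since $\phi_n$ is a homeomorphism, $\phi_n(\Omega_1),\phi_n(\Omega_2),\ldots$ are exactly the connected components of $\EC\setminus E_n$, and at most one of them is unbounded in this chart. For any bounded component $W$, the maximum principle applied to the subharmonic function $w\mapsto |w-z|$ on $\overline{W}$ (whose boundary lies in $E_n$) yields $W\subset\overline{B(z,\epsilon_n)}$. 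Hence at least one of $\phi_n(q_1),\phi_n(q_2)$ lies in $\overline{B(z,\epsilon_n)}$; passing to a subsequence, say $\phi_n(q_1)\to z$, so $\phi(q_1)=z$, contradicting $q_1\in\Omega_1\subset S^2\setminus\phi^{-1}(z)$.

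The main obstacle is the non-separating part, where the key topological fact needed is that every bounded complementary component of a small compact set is itself small; this is exactly what the maximum principle delivers once one works in a chart centered at $z$. The remaining steps reduce to routine manipulations of uniform convergence combined with the homeomorphism property of the $\phi_n$.
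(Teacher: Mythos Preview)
Your argument is correct. For closedness and connectedness you follow essentially the same line as the paper: the paper argues that for large $n$ the connected set $\phi_n^{-1}(O(z))$ is contained in $\Omega_1\cup\Omega_2$ yet meets both, while you make this explicit by pulling back a short arc through $\phi_n$ and locating a crossing point of $K$. These are the same idea.

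For the non-separating part your route differs from the paper's. The paper exploits the connectedness of \emph{all} fibers just proved: each $\phi^{-1}(w)$ with $w\neq z$ lies in a single $\Omega_i$, and upper semi-continuity of $w\mapsto\phi^{-1}(w)$ makes the resulting partition $W_1\cup W_2=\EC\setminus\{z\}$ open, contradicting connectedness of $\EC\setminus\{z\}$. You instead push forward by the homeomorphism $\phi_n$, observe that $E_n=\phi_n(\phi^{-1}(z))$ shrinks to $z$, and use that every bounded complementary component of a small compact planar set is itself small (your maximum-principle step; equivalently, the set $\{|w-z|>\max_{E_n}|w-z|\}$ is connected and disjoint from $E_n$, hence contained in the unbounded component). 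The paper's argument is shorter and more conceptual, leaning on the monotone-map structure; yours is more geometric and, notably, does not rely on the connectedness of the other fibers or on upper semi-continuity, so it is logically independent of the preceding step. Either approach suffices here.
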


\begin{proof}
The proof is similar to the Proposition 3.1(i) in \cite{Shi00}. Since $\phi$ is a continuous map,  the closeness of $\phi^{-1}(z)$ is obvious.

Suppose that $\phi^{-1}(z)$ is disconnected for some $z\in\EC$. Then there are two disjoint open sets $\Omega_1$, $\Omega_2\subset S^2$ such that 
\begin{center}
    $\phi^{-1}(z)\subset \Omega_1\cup \Omega_2$, $\Omega_1\cap \phi^{-1}(z)\not=\emptyset$ and $\Omega_2\cap \phi^{-1}(z)\not=\emptyset$. 
\end{center}
Clearly $\phi(S^2\setminus(\Omega_1\cup \Omega_2) )$ is a closed set and $z\notin \phi(S^2\setminus(\Omega_1\cup \Omega_2) )$. Let $O(z)$ be a neighborhood of $z$ which is small enough such that 
$$\phi(S^2\setminus(\Omega_1\cup \Omega_2))\cap \overline{O(z)} =\emptyset.$$ 
Since $\phi$ is a uniform limit of a sequence of homeomorphisms $\{\phi_n\}_{n\ge 1}$, there exists a homeomorphism $\phi_n:S^2\to \EC$ such that 
$$\phi_n(S^2\setminus(\Omega_1\cup \Omega_2))\cap\overline{O(z)} =\emptyset.$$
Then 
\begin{center}
    $\phi^{-1}_n(O(z))\subset \Omega_1\cup \Omega_2$ and 
$\phi_n(\phi^{-1}(z))\subset O(z)$. 
\end{center}
Hence 
\begin{center}
    $\phi^{-1}_n(O(z))\cap\Omega_1\not=\emptyset$ and
$\phi^{-1}_n(O(z))\cap\Omega_2\not=\emptyset$. 
\end{center}
This contradicts the fact that $\phi_n$ is a homeomorphism and $\phi^{-1}_n(O(z))$ is connected.

Suppose that $\phi^{-1}(z)$ separates $S^2$. Then there exist two nonempty sets $\Omega_1$, $\Omega_2$ in $S^2$ such that $\Omega_1\cap \Omega_2=\emptyset$ and $S^2\setminus \phi^{-1}(z)= \Omega_1\cup \Omega_2$. For any $w\in \EC\setminus\{z\}$, $\phi^{-1}(w)\in\Omega_1 $ or $\phi^{-1}(w)\in\Omega_2 $ since $\phi^{-1}(w)$ is connected. Let 
$$W_1:=\{w\in \EC\setminus\{z\}| \phi^{-1}(w)\subset\Omega_1\};$$
$$W_2:=\{w\in \EC\setminus\{z\}| \phi^{-1}(w)\subset\Omega_2\}.$$
It is clear that $W_1\cap W_2=\emptyset$ and $W_1\cup W_2=\EC\setminus\{z\}$. Meanwhile
$W_1\not=\emptyset$ and $W_2\not=\emptyset$ since $\Omega_1$, $\Omega_2$ are two non-empty sets. By the continuity of $\phi$, the set function $w\mapsto \phi^{-1}(w)$ is `upper semi-continuous'. Hence both $W_1$ and $W_2$ are open sets. This is impossible since $\EC\setminus\{z\}$ is connected.
\end{proof}

\section{Ray equivalence}
Recall that $x_\theta$ is the critical point of $P_\theta$. We have the following lemma:

\begin{lem}\label{different preimages}
    Let $x, x'\in J(P_\theta)$ be any two different preimages of $x_\theta$ under $P_\theta$. Then $\phi(x)\not=\phi(x')$.
\end{lem}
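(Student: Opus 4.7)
The plan is to locate $x$ and $x'$ on two Jordan disks whose $\phi$-images touch at a single well-controlled point, and then derive a contradiction from the hypothesis $\phi(x)=\phi(x')$.

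First I would identify where the preimages live. Since $x_\theta$ is not fixed by $P_\theta$, it has two distinct preimages. Writing $D := D_{P_\theta}$ and letting $D'$ denote the second component of $P_\theta^{-1}(D)$, both are Jordan disks meeting only at $x_\theta$ (any other meeting point would be a critical point of $P_\theta$, and $x_\theta$ is the only one). Because $P_\theta|_{\partial D}$ is topologically a rotation and $P_\theta|_{\partial D'}$ is a homeomorphism onto $\partial D$, one preimage $x$ sits in $\partial D \setminus \{x_\theta\}$ and the other $x'$ sits in $\partial D' \setminus \{x_\theta\}$. By Proposition \ref{siegel-semi-conjugate}, $\phi|_D$ is a conformal bijection onto $D_G$ (the unique $G$-invariant Fatou component carrying irrational-rotation dynamics), and the semiconjugacy $\phi \circ F = G \circ \phi$ together with the bijectivity of $\phi$ on $\mathring{K}(P_\theta)$ yields $\phi(D') = D'_G$, the second component of $G^{-1}(D_G)$. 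All four boundaries are quasi-circles, so Carath\'eodory extends each conformal bijection to homeomorphisms $\phi: \overline{D} \to \overline{D_G}$ and $\phi: \overline{D'} \to \overline{D'_G}$; in particular $\phi(x) \in \partial D_G$, $\phi(x') \in \partial D'_G$, and $\phi$ is injective on each closure.

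The crux is to show $\partial D_G \cap \partial D'_G = \{c_1\}$, where $c_1 := \phi(x_\theta)$. Any $w$ in this intersection must be a critical point of $G$: otherwise $G$ would be a local homeomorphism at $w$, so only one preimage component of $D_G$ could be locally adjacent to $w$, contradicting that both $D_G$ and $D'_G$ approach $w$. The critical points of $G$ are $c_1$ and $c_2 := \phi_2(x_c)$, inherited from $F$ via the Thurston equivalence $\phi_2$ of Lemma \ref{thurston-equi}. Since $\phi_2$ is a homeomorphism satisfying $\phi_2(\partial D_{P_\theta}) = \partial D_G$, and $x_c$ sits in the $P_c$-copy of $S^2$, which is disjoint from $\partial D_{P_\theta}$, we conclude $c_2 \notin \partial D_G$. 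Hence $w = c_1$.

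If now $\phi(x) = \phi(x')$, their common value lies in $\partial D_G \cap \partial D'_G = \{\phi(x_\theta)\}$, forcing $\phi(x) = \phi(x_\theta)$ and hence $x = x_\theta$ by the injectivity of $\phi|_{\overline{D}}$, contradicting $x \ne x_\theta$. The main obstacle is verifying that the second critical point $c_2$ of $G$ does not accidentally land on $\partial D_G$; this is resolved by the observation that the Thurston equivalence maps $\partial D_{P_\theta}$ homeomorphically onto $\partial D_G$, while $x_c$ sits in a disjoint part of $S^2$.
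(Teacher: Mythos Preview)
Your argument handles only the two \emph{immediate} preimages of $x_\theta$, i.e.\ the case $P_\theta(x)=P_\theta(x')=x_\theta$. However, the lemma---as the paper's own proof and, more importantly, its use in Lemma~\ref{finite many rays} make clear---concerns arbitrary \emph{iterated} preimages: points $x,x'$ with $P_\theta^n(x)=x_\theta$ and $P_\theta^m(x')=x_\theta$ for some $n,m\in\N$. In Lemma~\ref{finite many rays} one takes $x_1,x_2\in X_1=\{x:\exists\,n,\ P_\theta^n(x)=x_\theta\}$ with $\phi(x_1)=\phi(x_2)$ and invokes the present lemma to get a contradiction; nothing there forces $x_1,x_2$ to be first-level preimages. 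Your geometric picture (the two Jordan disks $D_G$ and $D'_G$ meeting only at $c_1$) does not cover this general situation: iterated preimages of $x_\theta$ lie on boundaries of deeper preimage components of $D_{P_\theta}$, and you would need an analogous statement about how the $\phi$-images of all those components touch.

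By contrast, the paper's proof is both shorter and covers the needed generality. Assuming $n>m$, it applies $F^n$ and the semiconjugacy to obtain $\phi(x_\theta)=\phi\big(F^{n-m}(x_\theta)\big)$ with both arguments on $\partial D_{P_\theta}$; since $\phi|_{\overline{D_{P_\theta}}}$ is a homeomorphism onto $\overline{D_G}$ (Carath\'eodory extension of the conformal conjugacy between Siegel disks, which you yourself use), this forces $x_\theta$ to be periodic on $\partial D_{P_\theta}$, contradicting irrational rotation. The whole machinery about $D'$, $D'_G$, and the location of the second critical point $c_2$ is unnecessary once one pushes forward to $\partial D_{P_\theta}$ in this way.
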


\begin{proof}
Let $x, x'\in J(P_\theta)$ and $m,n\in \N$ be two distinct integers such that $P^{n}_\theta(x)=x_\theta$ and $P^{m}_\theta(x')=x_\theta$. Without loss of generality, assume that $n>m$. Suppose that $\phi(x)=\phi(x')$. By condition (a) in Lemma \ref{siegel-semi-conjugate},
we have
$$\phi(F^n(x))=G^n(\phi(x))=G^n(\phi(x'))=\phi(F^n(x')).$$
Then
$$\phi(x_\theta)=\phi(F^{n-m}\circ F^m(x')) =\phi(F^{n-m}(x_\theta)).$$ 
Note that $F^{n-m}(x_\theta)\in \partial D_{P_\theta}$ and  
 $F^{n-m}(x_\theta)\not= x_\theta$. This implies that the boundary of $D_G=\phi(D_{P_\theta})$ must have at least one self-intersection point, and hence cannot be a Jordan curve. This contradicts  the fact that $D_{G}$ is a Siegel disk of $G$, as known in Lemma \ref{thurston-equi}.  

If $m=n$ such that $P^{n}_\theta(x)=x_\theta$ and $P^{m}_\theta(x')=x_\theta$. Without loss of generality, assume that $n=m=1$. Suppose $\phi(x)=\phi(x')$. Then $\phi(x)\in \partial D_{G}$ and there are at least four preimage components of $D_G$ attaching at $\phi(x)$. The orbit of $\phi(x)$ under $G$ will meet the critical point of $G$ once, so that one of the preimage components of $D_G$ forms a wandering orbit, which is impossible.
\end{proof}

\begin{lem}\label{finite many rays}
For any point $y$ in $J(P_c)$, suppose that at most $m\in \N^+$ external rays land at $y$. Then, in each ray-equivalence class of $F=P_\theta\sqcup P_c$, there are at most $2m$ joined external rays. 
\end{lem}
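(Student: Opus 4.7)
My plan is to encode the ray-equivalence class $\mathcal{R}$ of $y$ as a bipartite graph $G_{\mathcal{R}}$: the vertex set consists of the landing points (in $J(P_\theta)$ and in $J(P_c)$) of the joined rays in $\mathcal{R}$, and each $R_t = \overline{R_t(P_\theta)}\sqcup\overline{R_{1-t}(P_c)}$ in $\mathcal{R}$ contributes an edge joining its two landing points. Counting joined external rays in $\mathcal{R}$ is the same as counting edges in $G_{\mathcal{R}}$, and $G_{\mathcal{R}}$ is connected by the very definition of ray equivalence.

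First I would show $G_{\mathcal{R}}$ is acyclic, hence a finite tree. A cycle in $G_{\mathcal{R}}$ gives a non-peripheral Jordan curve on $S^2$ (concatenating the alternating ray segments together with the arcs at infinity coming from the identification $t\leftrightarrow 1-t$); iterating the pullback under $F$ this curve would persist as an invariant multicurve, i.e.\ a Thurston obstruction. This contradicts Lemma \ref{thurston-equi}, which provides the combinatorial equivalence of $F$ with the rational map $G$.

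Second I would establish the degree bound on the $J(P_\theta)$-side of $G_{\mathcal{R}}$: for a bounded-type Siegel quadratic $P_\theta$, at most two external rays of $P_\theta$ can land at any single point of $J(P_\theta)$. This is a standard consequence of the rotational rigidity on $\partial D_{P_\theta}$ and the fact that exactly one ray lands at the critical point $x_\theta$; the same kind of rigidity already underlies Lemma \ref{different preimages}.

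Finally I count. The vertex $y$ has degree at most $m$ in $G_{\mathcal{R}}$, so there are at most $m$ adjacent $J(P_\theta)$-vertices $x_1,\dots,x_k$ with $k\le m$. Each $x_i$ has degree at most $2$ and already has one edge to $y$, so it contributes at most one further edge, yielding at most $m$ additional edges in total. I claim no edges of $G_{\mathcal{R}}$ remain beyond this: any further edge would emanate from a new $J(P_c)$-vertex $y'$ and terminate at some $J(P_\theta)$-vertex $x'$; if $x'$ coincides with some $x_i$ then either $\deg x_i \ge 3$ (contradicting Step 2) or a cycle is formed (contradicting Step 1), while if $x'$ is new, iterating the same dichotomy forces a cycle by finiteness of the tree. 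Hence $G_{\mathcal{R}}$ has at most $m+m=2m$ edges. The main obstacle is Step 2, the $\le 2$ bound on the Siegel side together with the cycle-exclusion argument needed to close the count; once these are in hand, the remaining bookkeeping is routine.
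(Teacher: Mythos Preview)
Your counting in Step~3 does not close. Degree bounds alone (at most $2$ on the $J(P_\theta)$-side, at most $m$ on the $J(P_c)$-side) together with acyclicity do \emph{not} bound the number of edges of $G_{\mathcal{R}}$: an alternating path $y_0-x_0-y_1-x_1-y_2-\cdots$ with every vertex of degree $2$ is a tree satisfying both constraints and can be arbitrarily long. Your closing clause (``iterating the same dichotomy forces a cycle by finiteness of the tree'') is circular, since finiteness of the tree is exactly what you are trying to prove.

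The missing ingredient is not acyclicity but a much sharper dichotomy on the Siegel side. Writing $X_1=\{x\in J(P_\theta): P_\theta^n(x)=x_\theta\text{ for some }n\ge 0\}$ and $X_2=J(P_\theta)\setminus X_1$, points of $X_2$ have \emph{exactly one} external ray landing (so they are leaves of $G_{\mathcal{R}}$), while points of $X_1$ have two. The key step---which you allude to but never use---is that any ray-equivalence class contains \emph{at most one} point of $X_1$: if $x,x'\in X_1\cap\mathcal{R}$ were distinct then Proposition~\ref{slow-matingprop} gives $\phi(x)=\phi(x')$, contradicting Lemma~\ref{different preimages}. Once this is known, $G_{\mathcal{R}}$ is forced to be at most a double star centered at that single $X_1$-vertex (two $J(P_c)$-neighbours, each with at most $m-1$ further leaves in $X_2$), and the $2m$ bound follows by inspection; no separate acyclicity argument is required. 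Note that this step genuinely uses the already-constructed semiconjugacy $\phi$, so the argument is not purely combinatorial as your outline suggests. Your Step~1 is also under-justified (a ray-loop on $S^2$ is not automatically a Thurston obstruction in the sense of Theorem~\ref{zhang}: you have not checked invariance under pullback or the eigenvalue condition, and $F$ is not post-critically finite), but this becomes moot once the $X_1$-uniqueness is in place.
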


\begin{proof}
Note that for any $x\in J(P_\theta)$, at most two external rays of $P_\theta$ land at $x$. Define the sets 
\begin{equation*}
\begin{split}
        &X_1=\{x\in J(P_\theta)|\text{ there exists an } n\in \N  \text{ such that } P_\theta^n(x)=x_\theta \},\\
        &X_2=J(P_\theta)\setminus X_1.
\end{split}
\end{equation*}
There are two external rays landing at $x$ when $x\in X_1$, and only one external ray lands at $x$ when $x\in X_2$.

Denote a ray-equivalence class of $F$ by $\mathcal{R}$, and suppose $\mathcal{R}\cap X_1=\emptyset$. Then, there are at most $m$ joined external rays in $\mathcal{R}$, since every joined external rays in $\mathcal{R}$ consists of an external ray of $P_c$ landing at a common point $y\in J(P_c)$ and an external ray of $P_\theta$ landing at a point $x\in X_2$, along with their respective landing points.

Now suppose $\mathcal{R}\cap X_1\not=\emptyset$. Let $x_1\in \mathcal{R}\cap X_1$, and suppose $x_1$ is connected to $y_1\in J(P_c)$ by a joined external ray. Without loss of generality, assume there are $m$ external rays landing at $y_1$. Then, the remaining $m-1$ joined external rays connect $y_1$ to $x_2, x_3,\cdots,x_m$, respectively. 

If $x_2\in X_1$, by (\ref{step_3}), we would have
\begin{equation*}
    \phi(x_1)=\phi(y_1)=\phi(x_2),
\end{equation*}
which contradicts Lemma \ref{different preimages}. Therefore, $x_2\in X_2$. Similarly $x_3,\cdots,x_m\in X_2$. Since $x_1$ has two external rays landing at it, the other external ray is the same case. We conclude that there are at most $2m$ joined external rays in $\mathcal{R}$. 
\begin{figure}[!htpb]
 \setlength{\unitlength}{0.1mm}
  \centering
  \includegraphics[width=0.65\textwidth]{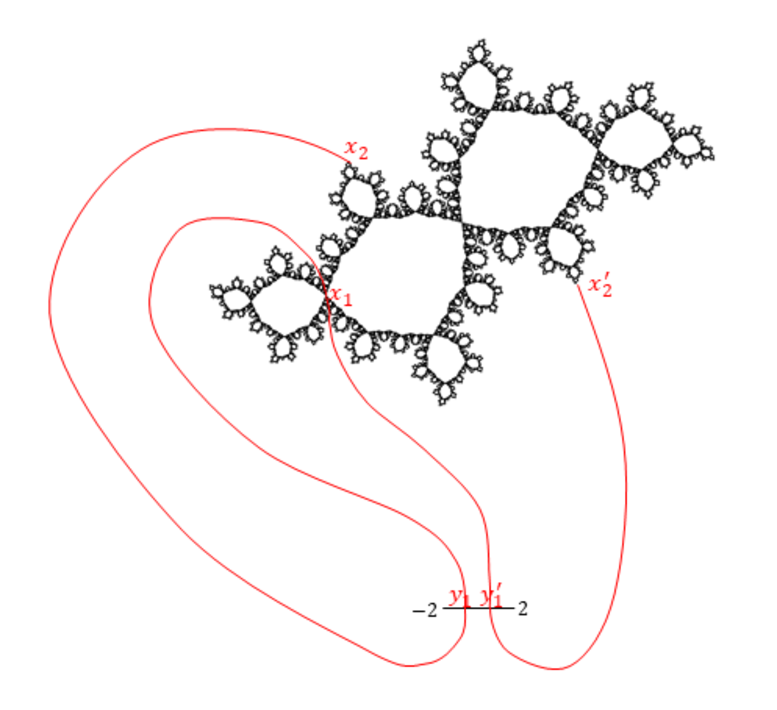}
  \caption{An illustration for a ray-equivalence class of $P_{\sqrt[3]{1/4}}\sqcup P_{-2}$.} 
  \label{joined external rays}
\end{figure}
\end{proof}

\begin{exam}[See Figure \ref{joined external rays}]
The set colored black in the upper half of Figure \ref{joined external rays} represents the Julia set of $P_{\theta}(z)=z^2+e^{2\pi i \theta}z$ with $\theta=\sqrt[3]{1/4}$, and the large region in the lower left of it is a Siegel disk (see Figure 26a in \cite{Mil06}). The closed interval in the lower half of Figure \ref{joined external rays} represents the Julia set of $P_{-2}(z)=z^2-2$. 
    
There are four joined external rays colored red, which connect the following points: $x_1$ and $y_1$; $y_1$ and $x_2$; $x_1$ and $y'_1$; $y'_1$ and $x'_2$, where $x_1\in X_1$ is a landing point of exactly two external rays, and $x_2, x'_2\in X_2$, each of which is a landing point of only one external ray. Note that for all $y\in J(P_{-2})\setminus\{-2,2\}$, two external rays land at $y$. These four joined external rays form a ray-equivalence class of $P_{\sqrt[3]{1/4}}\sqcup P_{-2}$.
    
This ray-equivalence class cannot form a loop connecting $x_1$ with $y_1$ by two different joined external rays by Lemma \ref{property}. (The positions of the red joined external rays in this figure are illustrative and do not represent the exact locations of the ray-equivalence class of $P_{\sqrt[3]{1/4}}\sqcup P_{-2}$.)
\end{exam}

For a given quadratic postcritically finite polynomial $P_c$, a point $\tilde{y}\in J(P_c)$ is a \emph{branching point} if $J(P_c)\setminus\{\tilde{y}\}$ has at least three connected components. Note that if at least three external rays land at $\tilde{y}$, then $\tilde{y}$ is a branching point. According to Thurston's result (see \cite{Thu09}), the orbit of such a branching point contains either a periodic point or a critical point. Thus, $\tilde{y}$ is eventually periodic. 
Moreover, there are only finitely many external rays landing at any periodic point of $J(P_c)$ by \cite[Lemma 18.12]{Mil06}, and $\tilde{y}$ also has finitely many external rays landing as its orbit can meet the critical point at most once. We conclude that for any postcritically finite quadratic polynomial $P_c$ and for all $y\in J(P_c)$, there are finitely many external rays landing at $y$. By Lemma \ref{finite many rays}, each ray-equivalence class of $F=P_\theta\sqcup P_c$, given a specific $P_c$, has finitely many joined external rays. 

\begin{lem}\label{ray equivalent}
For $x,y\in S^2$, $\phi(x) = \phi(y)$ if and only if $x$ and $y$ are ray equivalent. 
\end{lem}

\begin{proof}
Suppose $x,y\in S^2$ are ray equivalent. By Lemma \ref{finite many rays}, $x$ and $y$ are connected by finitely many joined external rays. These rays are mapped to a common point under $\phi$ by the continuity of $\phi$ and (\ref{step_3}). Therefore, $\phi(x) = \phi(y)$.

Now, suppose $\phi(x) = \phi(y)=z$. By Lemma \ref{finite many rays}, we know that each ray-equivalence class is closed. Hence, $\phi^{-1}(z)$ contains only one ray-equivalence class. If $\phi^{-1}(z)$ contained more than one class, it would be disconnected, which contradicts Lemma \ref{property}. Therefore, $x$ and $y$ must be ray equivalent.
\end{proof}

From now on, we know that for every $z\in J(G)$, $\phi^{-1}(z)$ is a closed ray-equivalence class, and $\phi^{-1}(z)$ is connected and nonseparating. Therefore, the following lemma can be applied:
\begin{lem}[\cite{Moo25}]
 Suppose that $\sim$ is a closed equivalence relation on the 2-sphere $S^2$ such that every equivalent class is a compact connected nonseparating proper subset of $S^2$. Then the quotient space $S^2/\sim$ is again homeomorphic to $S^2$. 
\end{lem}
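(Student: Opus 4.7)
The plan is to apply R.~L. Moore's classical decomposition theorem, organized in three steps.

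First, I would verify that the quotient $X := S^2/{\sim}$ is a compact metrizable space. The hypothesis that $\sim$ is closed as a subset of $S^2\times S^2$, together with compactness of every equivalence class, implies that the quotient map $\pi:S^2\to X$ is a closed surjection; hence $X$ is compact Hausdorff. Because $S^2$ is second countable and $\pi$ is closed, $X$ is also second countable, and therefore metrizable by Urysohn's theorem.

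Second, I would show that $X$ is a Peano continuum. Compactness and connectedness follow at once from $\pi$ being a continuous surjection. Local connectedness requires a short argument: given $x\in X$ and an open neighbourhood $U$ of $x$, the preimage $\pi^{-1}(U)$ is an open neighbourhood of the connected compactum $\pi^{-1}(x)$, and using that $S^2$ is locally connected together with the closedness of $\sim$ and connectedness of its equivalence classes, one can thicken $\pi^{-1}(x)$ to a connected open $\sim$-saturated set inside $\pi^{-1}(U)$, whose $\pi$-image is a connected open neighbourhood of $x$ contained in $U$.

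Third, and this is the decisive step, I would invoke Zippin's topological characterization of the $2$-sphere: a metrizable Peano continuum containing more than one point, with no cut point, in which every simple closed curve separates the space into exactly two components, is homeomorphic to $S^2$. The hypothesis that every equivalence class is a proper subset of $S^2$ guarantees that $X$ contains more than one point, and the non-separation hypothesis on each equivalence class is what rules out cut points of $X$. To verify the curve-separation property, one takes any simple closed curve $\gamma\subset X$ and considers its compact connected saturated preimage $\pi^{-1}(\gamma)\subset S^2$; using the Jordan curve theorem on $S^2$ together with the non-separation hypothesis, one checks that $\pi^{-1}(\gamma)$ separates $S^2$ into exactly two connected components, whose $\pi$-images are the two components of $X\setminus\gamma$. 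The main obstacle lies precisely here: $\pi^{-1}(\gamma)$ is in general not itself a Jordan curve, so the argument must approximate $\gamma$ by piecewise polygonal arcs avoiding all but finitely many of the non-degenerate equivalence classes (which form an at most countable family by second countability), apply the Jordan curve theorem to each approximation, and then pass to the limit, invoking at each stage the non-separation of individual equivalence classes to certify that collapsing one does not merge the two sides. Since this last bookkeeping is exactly the main content of \cite{Mo25}, in the write-up I would simply cite Moore's theorem and present the three-step outline above as motivation for why the three hypotheses (closedness of $\sim$, connectedness of classes, non-separation of classes) are all indispensable.
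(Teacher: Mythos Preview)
The paper does not prove this lemma at all: it is stated with the attribution \cite{Mo25} and used as a black box, with no argument given. Your proposal ultimately does the same thing---you explicitly say you would cite Moore's theorem for the hard part---so there is no discrepancy in approach, and your outline of why the three hypotheses are needed is a reasonable (and correct) gloss that the paper simply omits.
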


As a immediate corollary we have:

\begin{cor}\label{surjective}
    $\phi(K(P_\theta))\cup \phi(K(P_c))=\EC$.
\end{cor}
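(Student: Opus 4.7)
The plan is to combine the surjectivity of $\phi$ (already established in the preceding lemma) with the slow-mating estimate of Proposition \ref{slow-matingprop}. Concretely, I will show that every $w \in S^2$ satisfies $\phi(w) \in \phi(K(P_\theta)) \cup \phi(K(P_c))$, which together with $\phi(S^2) = \EC$ yields the equality in the corollary.

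The first step is to exploit the decomposition
\[
S^2 \;=\; K(P_\theta) \cup K(P_c) \cup \bigcup_{t \in \R/\Z} R_t
\]
coming from the formal mating construction: every point of $S^2$ outside $K(P_\theta) \cup K(P_c)$, including those on the glued circle at infinity, lies on some joined external ray $R_t$. Thus the only case that needs work is $w \in R_t$ with $w \notin K(P_\theta) \cup K(P_c)$.

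Next I would show that $\phi$ is constant on each joined external ray. By Proposition \ref{slow-matingprop}, $\diam_{\EC}(\phi_n(R_t)) \to 0$ as $n \to \infty$, and $\phi_n \to \phi$ uniformly by Proposition \ref{siegel-semi-conjugate}, so a routine triangle-inequality argument yields $\phi(w) = \phi(w')$ for all $w, w' \in R_t$. Taking $w'$ to be the landing point of $R_t(P_\theta)$ in $J(P_\theta) \subset K(P_\theta)$ --- which exists by local connectivity of $J(P_\theta)$ together with Carath\'{e}odory's extension theorem --- then gives $\phi(w) \in \phi(K(P_\theta))$, finishing the argument.

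I do not expect any substantial obstacle: the heavy analytic work has already been done in Propositions \ref{siegel-semi-conjugate} and \ref{slow-matingprop}, and what remains is a routine assembly. The only mild subtlety is that a single $R_t$ contains points on both sides of the circle at infinity as well as landing points in each filled Julia set; but since $R_t$ is connected and the diameter estimate applies to all of $\phi_n(R_t)$, the constancy of $\phi$ on $R_t$ handles these cases uniformly.
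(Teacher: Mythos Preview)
Your argument is correct and, in fact, more direct than the route the paper suggests. The paper places the corollary immediately after Moore's theorem and calls it an ``immediate corollary,'' implicitly passing through the fact that $S^2/\!\sim$ is again a sphere. You bypass Moore entirely: you use only the decomposition $S^2 = K(P_\theta)\cup K(P_c)\cup\bigcup_t R_t$, the slow-mating estimate from Proposition~\ref{slow-matingprop}, and the uniform convergence $\phi_n\to\phi$ from Proposition~\ref{siegel-semi-conjugate} to conclude that $\phi$ collapses each $R_t$ to its landing point in $K(P_\theta)$. Combined with surjectivity, this gives the corollary with no appeal to quotient-space topology. Moore's theorem is genuinely needed elsewhere in the paper (to verify that the topological mating is a sphere, as required by Definition~\ref{mating}), but for this particular statement your elementary route is cleaner and self-contained.
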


The proof of the Main Theorem follows from (\ref{step_3}), Lemma \ref{siegel-semi-conjugate}, Lemma \ref{ray equivalent}, and Corollary \ref{surjective}.

\newcommand{\etalchar}[1]{$^{#1}$}
\providecommand{\bysame}{\leavevmode\hbox to3em{\hrulefill}\thinspace}
\providecommand{\MR}{\relax\ifhmode\unskip\space\fi MR }
\providecommand{\MRhref}[2]{%
  \href{http://www.ams.org/mathscinet-getitem?mr=#1}{#2}}
\providecommand{\href}[2]{#2}

\end{document}